\newtheorem{theorem}{Theorem}[section]
\newtheorem{lemma}[theorem]{Lemma}
\theoremstyle{definition}
\newtheorem{example}[theorem]{Example}
\theoremstyle{remark}
\newtheorem{remark}[theorem]{Remark.}
\title{A note on the cross-index of a complete graph based on a linear tree}
\author{Yusuke Gokan \thanks{Department of Information and Computer Engineering, National Institute of Technology, Gunma College, 580 Toriba-cho, Maebashi-shi, Gunma 371-8530, Japan. } 
\and 
Hayato Katsumata \thanks{Department of Information and Computer Engineering, National Institute of Technology, Gunma College, 580 Toriba-cho, Maebashi-shi, Gunma 371-8530, Japan. } 
\and 
Katsuya Nakajima \thanks{Department of Information and Computer Engineering, National Institute of Technology, Gunma College, 580 Toriba-cho, Maebashi-shi, Gunma 371-8530, Japan. } 
\and 
Ayaka Shimizu \thanks{Department of Mathematics, National Institute of Technology, Gunma College, 580 Toriba-cho, Maebashi-shi, Gunma 371-8530, Japan. Email: shimizu@nat.gunma-ct.ac.jp } 
\and 
Yoshiro Yaguchi \thanks{Department of Mathematics, National Institute of Technology, Gunma College, 580 Toriba-cho, Maebashi-shi, Gunma 371-8530, Japan. Email: yaguchi-y@nat.gunma-ct.ac.jp }
}
\date{\today}
\begin{document}

\maketitle

\begin{abstract}
In this paper it is shown that a complete graph with $n$ vertices has an optimal diagram, i.e., a diagram whose crossing number equals the value of Guy's formula, with a free maximal linear tree and without free hamiltonian cycles for any odd integer $n\geq 7$. 
\end{abstract}

\section{Introduction}

A {\it graph} is a pair of sets of vertices and edges. 
A {\it diagram} $D$ of a graph $G$ is a diagram on the sphere $S^2$ of a spatial embedding of $G$. 
Thus, crossings of a diagram are only transverse and double points on edge diagrams at their interiors, where an {\it edge diagram} means a part of a diagram of a graph corresponding to an edge. 
The {\it crossing number of a diagram $D$ of a graph}, $c(D)$, is the number of crossings of $D$, 
and the {\it crossing number of a graph $G$}, $c(G)$, is the minimal value of $c(D)$ over all diagrams $D$ of $G$. 
In this paper, the crossing number of a graph is compared with the crossing number of a diagram called a based diagram which is used to define a $\Gamma$-unknotted graph for spatial graphs in \cite{AK-tr}. Let $n$ be a positive integer. 
A {\it complete graph}, $K_n$, is a graph of $n$ vertices such that each pair of two vertices are adjacent by a single edge, and each edge has different vertices on the endpoints. 
Let 
\begin{align*}
Z(n)=\frac{1}{4} \left\lfloor \frac{n}{2} \right\rfloor \left\lfloor \frac{n-1}{2} \right\rfloor \left\lfloor \frac{n-2}{2} \right\rfloor \left\lfloor \frac{n-3}{2} \right\rfloor ,
\end{align*}
where $\lfloor \ \rfloor$ is the floor function. 
Guy and Hill conjectured that the equality $c(K_n)=Z(n)$ holds for each positive integer $n$ (\cite{RKGuy1, HH}). 
It is known that Guy's conjecture (or Hill's conjecture), $c(K_n)=Z(n)$, is true for $n \le 12$ (\cite{PR}), and it is unknown for $n \ge 13$. 
(For $n=13$, it is shown in \cite{MPR} that $c(K_{13})=219, 221, 223$ or $225$, where $225$ is the value of $Z(13)$.) 
It is also known that the inequality $c(K_n) \le Z(n)$ holds for any $n$ (see, for example, \cite{BK, RKGuy2, HH, RT}). We say a diagram $D$ of $K_n$ is {\it optimal} if $c(D)=Z(n)$. 

A {\it maximal tree} $T$ of a graph $G$ is a connected subgraph of $G$ which contains no cycle, and contains all the vertices of $G$. 
A pair of a graph $G$ and its maximal tree $T$ is referred to as a {\it based graph} $(G; T)$. 
As shown in \cite{KSY} (see also \cite{AK-tr}), for any based graph $(G; T)$, there is a diagram $D$ of $G$ such that there are no crossings at the part corresponding to the maximal tree $T$. 
Such a diagram is denoted by $(D; T)$ and called a {\it based diagram $(D; T)$ of a based graph $(G; T)$}. 
The part of a diagram corresponding to a tree $T$ is also denoted by $T$ in this paper. 
For a based diagram $(D; T)$ of a based graph $(G; T)$, take a sufficiently small regular neighborhood $N$ of $T$ in $S^2$ such that each edge diagram other than in $T$ has just two intersection points with the boundary $C$ of $N$, i.e., $C$ is a simple closed curve in $S^2$. 
For two edge diagrams $e$ and $f$ which are not contained in $T$, let $e^1, e^2$ (resp. $f^1, f^2$) be the intersection points of $e$ (resp. $f$) and $C$. 
Now the {\it cross-index between $e$ and $f$}, $\varepsilon _T (e, f)$, is defined as follows (defined in \cite{KSY}): 
$\varepsilon _T (e, f)=0$ if there are the intersection points $e^{\alpha}, f^{\beta}, f^{\gamma}, e^{\delta}$ ($\alpha , \beta , \gamma , \delta \in \{ 1, 2 \} $) on $C$ in this cyclic order, and $\varepsilon _T (e, f)=1$ if there are the intersection points $e^{\alpha}, f^{\beta}, e^{\gamma}, f^{\delta}$ ($\alpha , \beta , \gamma , \delta \in \{ 1, 2 \} $) on $C$ in this cyclic order. (See Figure \ref{ef}.)
\begin{figure}[ht]
\begin{center}
\includegraphics[width=80mm]{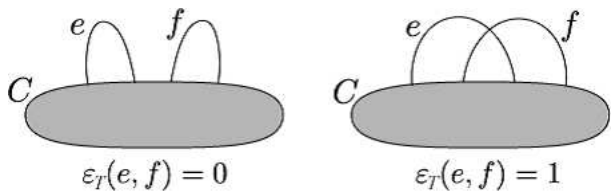}
\caption{The cross-index between edge diagrams in a based diagram. }
\label{ef}
\end{center}
\end{figure}
Let $e_1, e_2, \dots ,e_s$ be all the edge diagrams in a based diagram $(D; T)$ which are not in $T$. 
The {\it cross-index of a based diagram $(D; T)$} is defined as follows: 
\begin{align*}
\varepsilon (D; T) = \sum _{1\le i < j \le s} \varepsilon _T (e_i, e_j).
\end{align*}
The {\it cross-index $\varepsilon (G; T)$ of a based graph $(G; T)$} is the minimal value of $\varepsilon (D; T)$ for all based diagrams $(D; T)$ of $(G; T)$. 
The {\it cross-index $\varepsilon (G)$ of a graph $G$} is the minimal value of $\varepsilon (G; T)$ for all maximal trees $T$ of $G$. 
Let $c(D; T)$ denote the crossing number of a based diagram $(D; T)$ and let $c(G; T)$ be the minimal value of $c(D; T)$ for all based diagrams $(D; T)$ of $(G; T)$.  
The cross-index $\varepsilon (G; T)$ is equal to $c(G; T)$. (``Calculation Lemma'' in \cite{KSY}). 
Hence the inequality $c(G) \le \varepsilon (G) \le \varepsilon(G; T) = c(G; T)$ holds for any maximal tree $T$ of $G$ (Corollary 2.6 in \cite{KSY}). 
For a complete graph $K_n$, let $T^L$ be a maximal tree of $K_n$ which is linear, i.e., a maximal tree of $K_n$ without a vertex of degree three or more. 
The following question is asking whether the cross-index is equal to the crossing number, and whether a linear tree is a best possible tree on complete graphs: 

\phantom{x}
\noindent {\bf Question} (\cite{KSY}). \ Does the equality $c(K_n)= \varepsilon (K_n) = \varepsilon (K_n; T^L)$ hold?\\
\phantom{x}

\noindent It is confirmed that the equality holds for $n \le 12$ in \cite{KSY}. 
Hence it holds that $\varepsilon(K_n; T^L )=Z(n)$ for $n \le 12$. 
Let $H$ be a {\it hamiltonian cycle of} $K_n$, namely a cycle in $K_n$ containing all the vertices of $K_n$. 
Similarly to $\varepsilon(K_n; T)$, the cross-index $\varepsilon(K_n; H)$ of $K_n$ based on $H$ is defined. 
(Precise definition of $\varepsilon(K_n; H)$ is given in Section 2.) 
Since the hamiltonian cycle $H$ contains a linear tree $T^L$, the inequality $\varepsilon (D; T^L) \le \varepsilon (D; H)$ holds. 
Hence the inequality $c(K_n) \le \varepsilon (K_n) \le \varepsilon(K_n; T^L) \le \varepsilon (K_n ; H)$ holds.
In \cite{BK}, the inequality $\varepsilon (K_n ; H) \le Z(n)$ was shown by constructing a based diagram $(D; H)$ 
which is optimal, i.e., $\varepsilon (D; H)=Z(n)$, as follows: Draw a hamiltonian cycle $H$ as a regular $n$-gon, and draw all diagonals with positive slope (as straight line segments) and all other edges outside of $H$. 
Moreover, in \cite{AAFRS1}, it is shown that for any positive integer $n$, the equality $\varepsilon (K_n ; H)=Z(n)$ holds, and a sufficient condition for based diagrams $(D; H)$ of $(K_n; H)$ to be optimal is also described by using a {\it matrix representation}
which will be introduced in Section 2.

Besides, the based diagram $(D; T^L)$ of $K_7$ in Figure \ref{K7} satisfies $\varepsilon (D; T^L)=9=Z(7)$ and has no free hamiltonian cycle, where {\it a free hamiltonian cycle} of a diagram $D$ means a hamiltonian cycle which has no crossings. 
This paper shows the existence of an optimal based diagram $(D; T^L)$ of $(K_n; T^L)$ without a free hamiltonian cycle for an odd integer $n\geq 7$:
\begin{figure}[ht]
\begin{center}
\includegraphics[width=50mm]{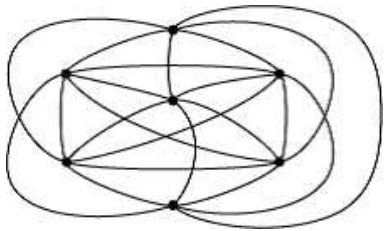}
\caption{An optimal based diagram $(D; T^L)$ of $(K_7; T^L)$ without free hamiltonian cycle.(The thick line represents a based linear tree.) }
\label{K7}
\end{center}
\end{figure}

\phantom{x}
\begin{theorem}
For any odd integer $n\geq 7$, $K_n$ has an optimal linear-tree based diagram $(D; T^L)$ such that there are no free hamiltonian cycles in $D$.
\label{main-thm}
\end{theorem}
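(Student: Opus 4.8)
My plan is to reformulate a linear-tree based diagram $(D;T^L)$ as a two-page book drawing. Placing the vertices $1,2,\dots ,n$ along the tree $T^L$ (the spine) in this order, the boundary curve $C$ of a neighborhood of $T^L$ is a single circle, and each non-tree edge may be drawn on one of the two sides of the spine; by the definition of the cross-index, $\varepsilon_{T^L}(e,f)=1$ exactly when $e$ and $f$ lie on the same side and their endpoints interleave in the spine order, and $\varepsilon_{T^L}(e,f)=0$ otherwise. Thus $\varepsilon(D;T^L)$ is the number of interleaving same-side pairs, and a hamiltonian cycle of $K_n$ is free in $D$ if and only if, under the fixed side-assignment of $D$, no two of its edges lie on the same side and interleave. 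The theorem therefore reduces to producing a side-assignment of the non-tree edges of $K_n$ with exactly $Z(n)$ interleaving same-side pairs for which every hamiltonian cycle contains at least one such pair.

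To control the count I would start from an optimal hamiltonian based diagram. By \cite{BK} and \cite{AAFRS1} there is a based diagram $(D_0;H)$ with $\varepsilon(D_0;H)=Z(n)$, and its matrix representation (Section 2) records precisely the side-assignment discussed above. Since $H\supset T^L$, reading $D_0$ as a $T^L$-based diagram gives $\varepsilon(D_0;T^L)\le \varepsilon(D_0;H)=Z(n)$; the single edge $H\setminus T^L$ becomes a non-tree edge that can be routed along the former cycle so as to meet no other edge, whence $\varepsilon(D_0;T^L)=Z(n)$. The difficulty is that in $(D_0;T^L)$ the cycle $H$ itself is free. I would then pass to a modified diagram $D$ by re-paging a carefully chosen set of edges, using the freedom among optimal diagrams provided by the sufficient condition of \cite{AAFRS1}: the modification must keep the matrix representation inside the optimal family, so that $\varepsilon(D;T^L)=Z(n)$ is preserved, while destroying every free hamiltonian cycle. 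Equivalently, and perhaps more cleanly, I would argue by induction on the odd integer $n$ with base case the diagram of $K_7$ in Figure \ref{K7}, adjoining two new vertices at each step, extending the path $T^L$, and assigning sides to the newly created non-tree edges so that they contribute exactly $Z(n+2)-Z(n)$ crossings; writing $n=2m+1$ one has $Z(n)=\binom{m}{2}^{2}$, so this increment equals $m^{3}$, a convenient target to realize edge by edge.

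For the non-existence of a free hamiltonian cycle I would use the following criterion: a hamiltonian cycle $C$ is free in $D$ if and only if the side-assignment of $D$ restricts to a proper two-coloring of the interleaving graph of the edges of $C$ with respect to the spine order, i.e. no two same-side edges of $C$ interleave. The task is thus to choose the side-assignment so that for every hamiltonian cycle $C$ some monochromatic interleaving pair is forced. I expect this to be the crux of the proof: it is a single global condition ranging over all hamiltonian cycles, and it is exactly here that the hypothesis that $n$ is odd should enter, through a parity obstruction that prevents the edges of an arbitrary $C$ from being consistently split between the two sides. In the inductive setting the statement to maintain is that no hamiltonian cycle of $K_{n+2}$ is free, which I would reduce to the inductive hypothesis by analyzing how a hypothetical free cycle must traverse the two newly added vertices and showing that any such traversal either forces a crossing among the new edges or yields a free hamiltonian cycle in $K_n$, contradicting the inductive hypothesis.

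The routine parts are the crossing count, either via the comparison $\varepsilon(D;T^L)\le\varepsilon(D;H)=Z(n)$ together with the explicit contribution $m^{3}$ of the new edges, or via a direct check that the matrix representation stays in the optimal family of \cite{AAFRS1}. The genuine obstacle is the simultaneous requirement: among the many optimal side-assignments I must single out one that also obstructs every free hamiltonian cycle, and then certify this global property. I anticipate that reconciling the exact count with the obstruction, and carrying the oddness-driven parity argument uniformly in $n$, will demand the most care.
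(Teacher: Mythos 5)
There is a genuine gap, and it is structural. Your opening reduction identifies a $T^L$-based diagram with a two-page book drawing in which each non-tree edge is assigned to one of the two sides of the spine, and you then reduce the theorem to finding a side-assignment with exactly $Z(n)$ same-side interleaving pairs such that every hamiltonian cycle contains such a pair among its own edges. This reduction cannot succeed. In any such two-page drawing, the hamiltonian cycle consisting of the spine $T^L$ together with the chord joining its two end vertices has no interleaving pair at all: each spine edge joins consecutive vertices and the closing chord nests with every other edge, so none of these edges is ever crossed on either page. Hence that cycle is free for every side-assignment in your model, and the target of your reduction is unattainable. A second, related problem is your freeness criterion: a free hamiltonian cycle here is one none of whose edges carries any crossing, including crossings with edges \emph{outside} the cycle, so freeness must be destroyed by external crossings rather than by self-crossings of the cycle, which is exactly what your ``proper two-coloring of the interleaving graph'' test fails to see.

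The paper's proof exploits precisely the freedom your model discards: in a linear-tree based diagram the two ends of a non-tree edge may leave the spine on different sides. Starting from the explicit optimal diagram $(D;H)$ with $M(D;H)=M_n$ of Lemma \ref{mz-lem}, one reroutes the single edge $e_{(m,n)}$, $m=\tfrac{n+1}{2}$, so that it leaves $v_m$ into the Northern Hemisphere, crosses the cycle edge $e_{(1,2)}$ exactly once, and enters $v_n$ from the Southern Hemisphere; Lemma \ref{remove-add} shows the total crossing count is unchanged, so the diagram stays optimal, while the new crossing on $e_{(1,2)}$ destroys the freeness of $H$. Every other hamiltonian cycle contains an edge outside $E(H)$, and by Lemma \ref{no-free} (valid for odd $n\ge 13$, with the small cases checked by inspection) every such edge is already crossed, so no other cycle is free. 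The oddness hypothesis enters only through that cited lemma; the parity obstruction you anticipate plays no role, and neither your re-paging idea nor your two-vertex induction is developed far enough to replace these two steps.
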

\phantom{x}

\noindent Fix a positive integer $n$. For a diagram $D$ of $K_n$, let $V(D)$ be the set of all the vertices of $D$ and let $E(D)$ be the set of all the edge diagrams in $D$. 
Two diagrams $D$ and $D'$ of $K_n$ are {\it isomorphic} if there is a bijection from $V(D)$ to $V(D')$ that induces a bijection from $E(D)$ to $E(D')$ which sends each pair $(e_1,e_2)$ of $E(D)$ such that $e_1$ and $e_2$ crosses at their interiors to such a pair $(e_1',e_2')$ of $E(D')$. 
It is known that any optimal diagram $D$ of $K_n$ is isomorphic to the diagram shown in Figure \ref{K5K6} when $n=5, 6$. 
Hence, any optimal diagram $D$ of $K_n$ must have a free hamiltonian cycle when $n=5,6$. 
(See Figure \ref{K5K6}.)
\begin{figure}[ht]
\begin{center}
\includegraphics[width=90mm]{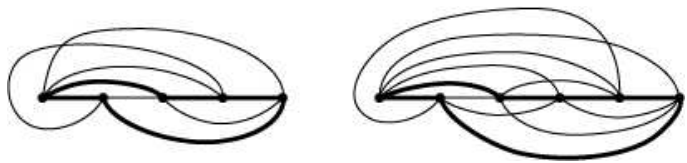}
\caption{Optimal diagrams of $K_5$ (left) and $K_6$ (right). (The thick lines represent hamiltonian cycles.) }
\label{K5K6}
\end{center}
\end{figure}

\noindent When $n=8$, $K_8$ has an optimal linear-tree based diagram $(D; T^L)$ such that there are no free hamiltonian cycles in $D$ as shown in Figure \ref{K8}. It is unknown if there exists such a based diagram for any even number $n\geq 10$.
\begin{figure}[ht]
\begin{center}
\includegraphics[width=50mm]{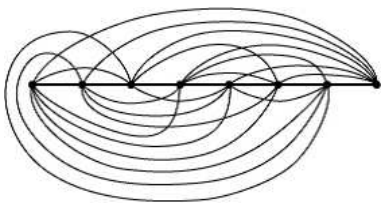}
\caption{An optimal based diagram $(D; T^L)$ of $(K_8; T^L)$ without free hamiltonian cycles. 
(The thick line represents $T^L$.) }
\label{K8}
\end{center}
\end{figure}

\noindent In \cite{AAFRS2}, a condition ``shellable'' for a diagram $D$ of $K_n$ is introduced, and it is proved that if $D$ is shellable then $c(D)\geq Z(n)$. In \cite{AAFRS2} it is also proved that any based diagram of $(K_n; H)$ is shellable. 
Recently, in \cite{AAFM}, a condition ``bishellable'', a generalization of ``shellable'', for a diagram $D$ of $K_n$ is introduced, and it is proved that if $D$ is bishellable then $c(D)\geq Z(n)$. It is also shown that any linear-tree based diagram of $(K_n; T^{L})$ is bishellable. Therefore, $\varepsilon (K_n; T^{L})=\varepsilon (K_n; H)=Z(n)$ holds. Let $\overline{\varepsilon}(K_n:; T^{L})$ denote the minimal number of crossing numbers for all linear-tree based diagrams of $(K_n; T^{L})$ which have no free hamiltonian cycle. The following theorem follows from Theorem \ref{main-thm}:

\phantom{x}
\begin{theorem}For any odd integer $n\geq 7$, we have $\overline{\varepsilon}(K_n; T^{L})=Z(n)$. 
\label{2nd-main-thm}
\end{theorem}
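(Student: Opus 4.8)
The plan is to prove the two inequalities $\overline{\varepsilon}(K_n; T^L) \geq Z(n)$ and $\overline{\varepsilon}(K_n; T^L) \leq Z(n)$ separately. Recall that $\overline{\varepsilon}(K_n; T^L)$ is the minimum of the crossing number $c(D)$ taken over the class $\mathcal{F}$ of linear-tree based diagrams $(D; T^L)$ of $(K_n; T^L)$ that contain no free hamiltonian cycle; for odd $n \geq 7$ this class is nonempty, so the minimum is well defined, and establishing this nonemptiness is itself a byproduct of the construction used below.

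For the lower bound I would invoke the results recalled from \cite{AAFM}: every linear-tree based diagram of $(K_n; T^L)$ is bishellable, and every bishellable diagram $D$ of $K_n$ satisfies $c(D) \geq Z(n)$. In particular $c(D) \geq Z(n)$ holds for every $D \in \mathcal{F}$, since membership in $\mathcal{F}$ only cuts the class down further by forbidding a free hamiltonian cycle. Taking the minimum over $\mathcal{F}$ then gives $\overline{\varepsilon}(K_n; T^L) = \min_{D \in \mathcal{F}} c(D) \geq Z(n)$.

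For the upper bound I would exhibit a single member of $\mathcal{F}$ attaining $Z(n)$, which is exactly what Theorem \ref{main-thm} supplies: for every odd $n \geq 7$ there is an optimal linear-tree based diagram $(D_0; T^L)$, so $c(D_0) = Z(n)$, having no free hamiltonian cycle, whence $D_0 \in \mathcal{F}$. Therefore $\overline{\varepsilon}(K_n; T^L) \leq c(D_0) = Z(n)$. Combining the two inequalities yields $\overline{\varepsilon}(K_n; T^L) = Z(n)$.

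The step carrying all the weight is the upper bound, and it is entirely deferred to Theorem \ref{main-thm}; the lower bound is immediate from the bishellability theorem of \cite{AAFM}. The only point requiring care is to confirm that the two constraints ``$c(D) = Z(n)$'' and ``no free hamiltonian cycle'' can be met by a \emph{single} diagram, which is precisely the conjunction guaranteed by Theorem \ref{main-thm}, so no new construction is needed here. Consequently I do not expect a genuine obstacle in this argument: the whole difficulty resides in Theorem \ref{main-thm} (and the restriction to odd $n \geq 7$ is inherited from it, consistent with the fact that for $n = 5,6$ every optimal diagram is forced to contain a free hamiltonian cycle).
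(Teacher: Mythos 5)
Your proof is correct and matches the paper's intended argument exactly: the paper derives this theorem directly from Theorem \ref{main-thm} (upper bound via the constructed diagram) together with the bishellability result of \cite{AAFM} quoted just before the statement (lower bound, since $\varepsilon(K_n;T^L)=Z(n)$ already bounds all linear-tree based diagrams from below). Nothing is missing.
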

\phantom{x}

In Section 2, we will give a concrete matrix $M_n$ and prove that a based diagram $(D; H)$ of $(K_n; H)$ corresponding to $M_n$ is optimal by direct calculations, although $(D; H)$ is isomorphic to a diagram corresponding to a matrix introduced in [\cite{AAFRS1}, Theorem 19]. 
In Section 3, Theorem \ref{main-thm} is proved by using the matrix $M_n$. 

\section{A complete graph based on a hamiltonian cycle} 

Let $v_1, v_2, \dots ,v_n$ be the vertices of a complete graph $K_n$. 
Take a hamiltonian cycle $H=v_1v_2\dots v_nv_1$ of $K_n$ with the cyclic order $v_1, v_2, \dots , v_n, v_1$ without loss of generality. 
In this section, we consider a based diagram $(D; H)$ of $(K_n; H)$ on $S^2$ such that the hamiltonian cycle $H$ is on the equator and the other edges than $H$ are on the Northern or Southern Hemisphere. 
For the above based diagram $(D; H)$ of $(K_n; H)$, the {\it cross-index} is defined as follows: 
Let $e, f$ be two edge diagrams in $(D; H)$ which are not contained in $H$. 
Let $A$ be the boundary of a sufficiently small regular neighborhood of $H$ in $S^2$. 
Let $e^1, e^2$ (resp. $f^1, f^2$) be intersection points of $e$ (resp $f$) and $A$. 
The {\it cross-index between $e$ and $f$}, $\varepsilon _H (e,f)$, is defined to be $1$ if $e$ and $f$ are on the same side of $H$, and there are the intersection points $e^{\alpha}, f^{\beta}, e^{\gamma}, f^{\delta}$ ($\alpha , \beta , \gamma , \delta \in \{ 1,2 \}$) on $A$ in this cyclic order. 
Otherwise, let $\varepsilon _H (e,f)=0$. (See Figure \ref{efh}.) 
\begin{figure}[ht]
\begin{center}
\includegraphics[width=100mm]{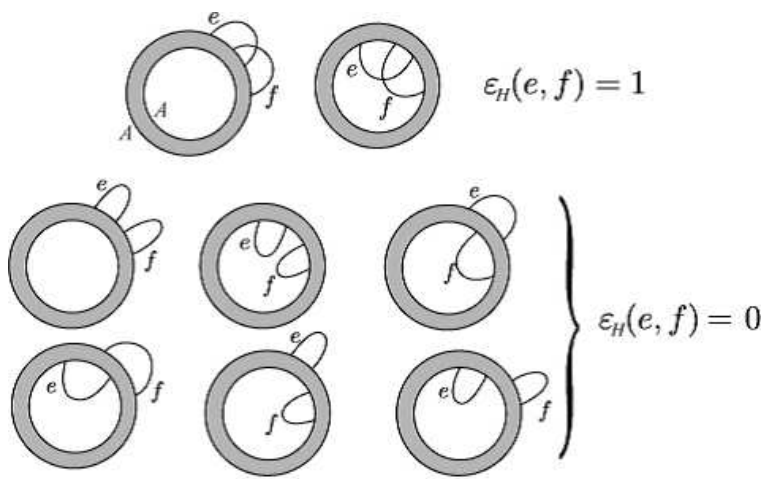}
\caption{The cross-index between two edge diagrams in a hamiltonian-cycle based diagram. }
\label{efh}
\end{center}
\end{figure}
The {\it cross-index} of a based diagram $(D; H)$ of $(K_n; H)$, denoted by $\varepsilon (D; H)$, is the sum of cross-indices for all the pairs of edge diagrams in $(D; H)$ which are not contained in $H$. 
Note that the cross-index $\varepsilon (D; H)$ depends only on whether each edge diagram is on the Northern Hemisphere or the Southern Hemisphere, i.e., depends only on the information around the neighborhood of $H$, not necessarily the whole $D$. 
Note that there are $2^{\frac{(n-3)n}{2}}$ possibilities of the choice of the hemispheres for each $n$. 
The {\it cross-index} of $(K_n; H)$, denoted by $\varepsilon (K_n; H)$, is the minimal value of $\varepsilon (D; H)$ for all based diagrams $(D; H)$. 
Let $c(K_n; H)$ denote the minimal number of crossings for all based diagrams $(D; H)$ of $(K_n ; H)$. 
Since the Northern and Southern Hemispheres bounded by $A$ are discs, similarly to the Calculation Lemma in \cite{KSY}, the equality $\varepsilon (K_n; H) = c(K_n ; H)$ holds. \\

Let $e_{(i,j)}$ denote the edge diagram in $(D; H)$ of $(K_n ; H)$ which has the vertices $v_i$ and $v_j$ at the endpoints. 
For each based diagram $(D; H)$ of $(K_n; H)$, define a matrix $M(D; H)=(a_{(i,j)})$ to be: 
\begin{align*}
a_{(i,j)}= \left\{ 
\begin{array}{ll}
1 & \text{ if } j \ge i+2 \text{ and } e_{(i,j)} \text{ is on the Northern Hemisphere} \\
0 & \text{ if } j \le i+1 \text{ or } (i,j)=(1,n) \\
-1 & \text{ if } j \ge i+2 \text{ and } e_{(i,j)} \text{ is on the Southern Hemisphere}
\end{array}\right.
\end{align*}
An example is shown in Figure \ref{ex-k6}. 
\begin{figure}[ht]
\begin{center}
\includegraphics[width=100mm]{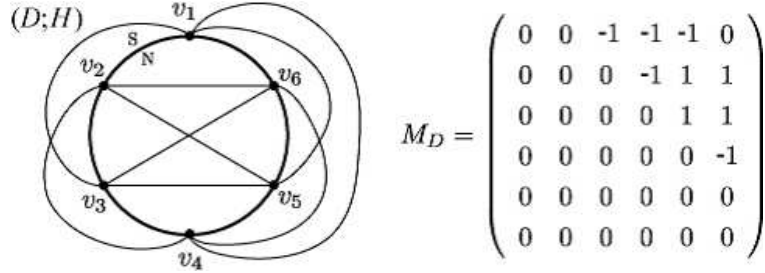}
\caption{A based diagram $(D;H)$ of $(K_6; H)$ and its matrix $M(D; H)$. }
\label{ex-k6}
\end{center}
\end{figure}
By definition, the following lemma holds: 

\phantom{x}
\begin{lemma}
Let $m_n=(a_{(i,j)})$ be an $n \times n$ matrix satisfying: 
\begin{align*}
a_{(i,j)}= \left\{ 
\begin{array}{ll}
0 & \text{ if } j \le i+1 \text{ or } (i,j)=(1,n) \\
1 \text { or } -1 & \text{ otherwise}
\end{array}\right. 
\end{align*}
Then there exists a based diagram $(D; H)$ of a based complete graph $(K_n ; H)$ such that $M(D; H)=m_n$ (not uniquely). 
\label{lem-matrix}
\end{lemma}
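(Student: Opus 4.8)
The plan is to build the diagram directly from the prescribed matrix, reading off from each nonzero entry which hemisphere the corresponding edge should occupy. First I would place the vertices $v_1, \dots, v_n$ in this cyclic order along the equator of $S^2$ and draw the hamiltonian cycle $H$ as the equator itself. The edges of $K_n$ that do not lie on $H$ are exactly the $e_{(i,j)}$ with $i+2 \le j$, with the wraparound pair $(1,n)$ still belonging to $H$; these non-cycle edges are precisely the positions at which $m_n$ is allowed to take the values $1$ or $-1$. For each such pair I would draw $e_{(i,j)}$ as a simple arc in the Northern Hemisphere when $a_{(i,j)} = 1$ and in the Southern Hemisphere when $a_{(i,j)} = -1$.

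The crucial point, which is why the lemma holds essentially by definition, is that each hemisphere is a disk whose boundary circle carries all $n$ vertices, so any prescribed family of arcs joining pairs of these boundary vertices can be realized inside that disk. Concretely one may draw them as chords and then perturb to remove tangencies and points of concurrency, so that every crossing is a transverse double point in the interior of two edge diagrams, as required for a diagram of $K_n$. Arcs sharing an endpoint merely meet at a common vertex $v_i$ and do not count as crossings, so no conflict arises from drawing many edges incident to the same vertex.

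It then remains to confirm that the constructed diagram $(D; H)$ satisfies $M(D; H) = m_n$. This is immediate from the definition of $M(D; H)$: the forced zero entries of $m_n$ sit exactly at the edges $e_{(i,i+1)}$ of $H$ and at the excluded pair $(1,n)$, while each entry equal to $1$ or $-1$ records precisely the hemisphere we selected for that edge. The non-uniqueness is clear, since the arcs within a single hemisphere may be drawn with different crossing patterns, all yielding the same matrix. I do not anticipate a genuine obstacle here; the only steps deserving a sentence of care are the genericity argument guaranteeing that the arcs can be arranged so that all crossings are transverse double points, and the bookkeeping verifying that the positions forced to be zero are exactly the $n$ edges of the hamiltonian cycle $H$.
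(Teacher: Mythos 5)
Your construction is exactly what the paper intends: the paper states this lemma with the remark ``By definition, the following lemma holds,'' relying on the setup already described in Section 2 (hamiltonian cycle on the equator, each remaining edge drawn as an arc in the hemisphere dictated by the sign of the entry, each hemisphere being a disk in which any chord system can be drawn generically). Your proposal simply makes those omitted details explicit and is correct.
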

\phantom{x}

\noindent As shown in the following lemma, the cross-index can be obtained from the matrix. 

\phantom{x}
\begin{lemma}
Let $(D;H)$ be a based diagram of a based complete graph $(K_n; H)$ with the hamiltonian cycle $H=v_1 v_2 \dots v_n v_1$, and let $M(D; H) =(a_{(i,j)})$ be its matrix. 
Then the following formula holds:  
\begin{align*}
\varepsilon (D;H) = \sum _{i<k<j<l} \left\lfloor \frac{a_{(i,j)}\times a_{(k,l)} +1}{2} \right\rfloor.
\end{align*}
\label{ci-formula}
\end{lemma}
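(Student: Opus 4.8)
The plan is to evaluate both sides as counts of the same family of pairs of diagonals. Since $\varepsilon(D;H)$ is by definition the sum of $\varepsilon_H(e,f)$ over all unordered pairs of edge diagrams not contained in $H$, and each summand is $0$ or $1$, it suffices to show that the right-hand side counts exactly those pairs $\{e,f\}$ with $\varepsilon_H(e,f)=1$.

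First I would record the purely combinatorial meaning of $\varepsilon_H$. The vertices appear in the cyclic order $v_1,\dots,v_n$ along $H$, hence in the same cyclic order along the boundary $A$ of the neighborhood of $H$. For two diagonals $e=e_{(i,j)}$ and $f=e_{(k,l)}$ (with $i<j$, $k<l$) having four distinct endpoints and lying on the same hemisphere, their four intersection points with $A$ occur in the cyclic order inherited from that of $v_i,v_j,v_k,v_l$; thus $\varepsilon_H(e,f)=1$ precisely when the endpoints of $e$ and $f$ alternate (interleave) around $H$, and $\varepsilon_H(e,f)=0$ when they do not, or when $e,f$ lie on opposite hemispheres. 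Pairs of diagonals sharing a common endpoint never interleave in a based diagram, since they emanate from the common vertex as a non-crossing fan, so they contribute $0$; I will note that these are precisely the pairs excluded from the right-hand sum.

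Next I would translate \emph{interleave} into the index pattern. For any four distinct indices $a<b<c<d$ the three ways of pairing them into two chords are $\{a,b\},\{c,d\}$ and $\{a,d\},\{b,c\}$ and $\{a,c\},\{b,d\}$, and only the last interleaves around the circle. Writing the two chords as $e_{(i,j)},e_{(k,l)}$ with $i<j$, $k<l$ and $i<k$, this interleaving pairing is exactly the one with $i<k<j<l$. Consequently the map $(i,k,j,l)\mapsto\{e_{(i,j)},e_{(k,l)}\}$ is a bijection from $\{(i,j,k,l): i<k<j<l\}$ onto the set of interleaving (equivalently, four-distinct-endpoint crossing) pairs of diagonals; moreover in this pattern $j\ge i+2$, $l\ge k+2$, and neither $(i,j)$ nor $(k,l)$ can equal $(1,n)$, so both $a_{(i,j)}$ and $a_{(k,l)}$ are nonzero.

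Finally I would evaluate the summand. For $x,y\in\{-1,0,1\}$ one checks that $\lfloor (xy+1)/2\rfloor$ equals $1$ if $xy=1$ and $0$ if $xy\in\{0,-1\}$; since the two entries appearing are both in $\{\pm 1\}$, the term is $1$ iff $a_{(i,j)}=a_{(k,l)}$, that is, iff $e_{(i,j)}$ and $e_{(k,l)}$ lie on the same hemisphere, and $0$ otherwise. Combining with the first step, each term of the right-hand sum equals the cross-index of the corresponding interleaving pair, while all non-interleaving and all adjacent pairs contribute $0$ to both sides; summing then yields the formula. The only genuinely delicate point is the bookkeeping in the middle step—verifying that restricting to $i<k<j<l$ selects each crossing pair exactly once and never involves an $H$-edge—together with justifying that edges sharing an endpoint are non-interleaving; both are elementary but must be stated with care.
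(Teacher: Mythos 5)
Your proposal is correct and follows essentially the same route as the paper's own proof: identify $\varepsilon_H(e_{(i,j)},e_{(k,l)})=1$ with the index pattern $i<k<j<l$ together with equality of the (nonzero) matrix entries, and observe that $\left\lfloor (a_{(i,j)}a_{(k,l)}+1)/2\right\rfloor$ is exactly the indicator of that equality. You simply spell out a few bookkeeping points (the interleaving bijection, exclusion of shared-endpoint pairs and of $(1,n)$) that the paper leaves implicit.
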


\begin{proof}
Let $e_{(i,j)}$ and $e_{(k,l)}$ be edge diagrams in $(D; H)$ which do not belong to $H$, where $i+1<j$, $k+1<l$ and $i<k$. 
By definition, $\varepsilon (e_{(i,j)}, e_{(k,l)})=1$ if and only if $i<k<j<l$ and $e_{(i,j)}$ and $e_{(k,l)}$ are on the same side of the hamiltonian cycle. 
In terms of matrices, $\varepsilon (e_{(i,j)}, e_{(k,l)})=1$ if and only if $i<k<j<l$ and the elements $a_{(i,j)}$ and $a_{(k,l)}$, which take $1$ or $-1$, have the same value, that is, $i<k<j<l$ and $\left\lfloor \frac{a_{(i,j)} \times a_{(k,l)} +1}{2}\right\rfloor =1$. 
Hence the cross-index $\varepsilon (D; H)$ is the sum of $\varepsilon (e_{(i,j)}, e_{(k,l)})=\left\lfloor \frac{a_{(i,j)} \times a_{(k,l)} +1}{2} \right\rfloor$ for all the pairs of edge diagrams $e_{(i,j)}$ and $e_{(k,l)}$ for $i<k<j<l$. 
\end{proof}
\phantom{x}
Let $K_n$ be a complete graph with vertices $v_1, v_2, \dots $ and $v_n$, 
and let $H$ be the hamiltonian cycle $v_1 v_2 \dots v_n v_1$. Let $M_n=(a_{(i,j)})$ be the $n\times n$ matrix defined by: 
$$
a_{(i,j)}= \begin{cases}
0 & \text{if } j\le i+1 \text{ or } (i,j)=(1,n) \\
1 & \text{if } j \ge i+2 \text{ and } \frac{n+2}{2}-i \le j \le n-i \\
    & \  \ \text{  or } j \ge i+2 \text{ and  } j \ge \frac{3n+2}{2}-i \\
-1 & \text{ otherwise}
\end{cases}\ \text{if}\ n \equiv 0 \pmod 2
$$ and 
$$
a_{(i,j)}= \begin{cases}
0 & \text{if } j\le i+1 \text{ or } (i,j)=(1,n) \\
1 & \text{if } j \ge i+2 \text{ and } \frac{n+3}{2}-i \le j \le n-i \\
    & \  \ \text{  or } j \ge i+2 \text{ and  } j \ge \frac{3n+1}{2}-i \\
-1 & \text{ otherwise}
\end{cases}
\ \text{if}\ n \equiv 1 \pmod 2.$$ Then, the following lemma holds: 
\phantom{x}
\begin{lemma}
Let $(D;H)$ be a based diagram of the based complete graph $(K_n ; H)$. If $M(D; H)=M_n$, then $(D; H)$ is optimal, i.e., 
$\varepsilon (D; H)=Z(n)$.
\label{mz-lem}
\end{lemma}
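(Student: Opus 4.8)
The plan is to evaluate the sum in Lemma \ref{ci-formula} directly for the sign pattern $M_n$. First I would observe that for $x,y \in \{+1,-1\}$ the quantity $\lfloor (xy+1)/2 \rfloor$ equals $1$ when $x=y$ and $0$ when $x=-y$. Since every pair $e_{(i,j)}, e_{(k,l)}$ occurring in the sum has $i<k<j<l$, both entries are genuine diagonals (so $a_{(i,j)}, a_{(k,l)} \in \{+1,-1\}$, and neither can be the excluded $(1,n)$ entry), and Lemma \ref{ci-formula} reduces the whole computation to a pure counting problem:
\begin{align*}
\varepsilon(D;H) = \#\{(i,k,j,l) : i<k<j<l,\ a_{(i,j)} = a_{(k,l)}\},
\end{align*}
the number of \emph{monochromatic crossing quadruples}, i.e. interleaving pairs of diagonals lying on the same hemisphere. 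The goal is to show this count equals $Z(n)$.

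Second, I would record that, for diagonals, the sign $a_{(i,j)}$ depends only on the index sum $s=i+j$. Reading off the definition of $M_n$, the value $+1$ occurs exactly on two blocks of $s$ (for odd $n$, on $[\tfrac{n+3}{2},n]\cup[\tfrac{3n+1}{2},2n-2]$, and for even $n$ on $[\tfrac{n+2}{2},n]\cup[\tfrac{3n+2}{2},2n-2]$), and $-1$ on the two complementary blocks; geometrically this is precisely the ``positive-slope inside / negative-slope outside'' splitting of \cite{BK} expressed combinatorially. Writing $\sigma(s)$ for this common sign, the target becomes $\#\{i<k<j<l : \sigma(i+j)=\sigma(k+l)\}$.

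Third, I would compute this count directly. A convenient reorganization is $[\sigma(i+j)=\sigma(k+l)] = \tfrac12\bigl(1+\sigma(i+j)\sigma(k+l)\bigr)$, which splits the count as $\tfrac12\binom{n}{4} + \tfrac12 S$, where $S=\sum_{i<k<j<l}\sigma(i+j)\sigma(k+l)$ and where the total number of crossing quadruples is $\binom{n}{4}$ (each $4$-subset of vertices contributes a unique interleaving pair). It then remains to evaluate $S$ and to check that $\tfrac12\binom{n}{4}+\tfrac12 S$ coincides with the closed forms of $Z(n)$, namely $\tfrac{(n-1)^2(n-3)^2}{64}$ for odd $n$ and $\tfrac{n(n-2)^2(n-4)}{64}$ for even $n$. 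Equivalently, one can count monochromatic crossing quadruples block by block: for each ordered choice of equal-sign blocks for $(i,j)$ and $(k,l)$, count the interleaving configurations subject to the interval constraints on $i+j$ and $k+l$, and sum.

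I expect the main obstacle to be the bookkeeping in this final step. The interleaving requirement $i<k<j<l$ must be enforced simultaneously with the block-membership inequalities on the sums $i+j$ and $k+l$, and the floor-function thresholds force a split according to the parity of $n$ (and possibly its residue modulo $4$), so the summation breaks into several cases whose boundary terms must be tracked carefully. The crux is verifying that, after the routine simplification of these nested sums, every case collapses to exactly the stated value $Z(n)$.
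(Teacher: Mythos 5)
Your setup is correct and is essentially the same reduction the paper uses: by Lemma \ref{ci-formula} the cross-index is the number of interleaved pairs of diagonals carrying the same sign, and your observation that for $M_n$ the sign of a diagonal $a_{(i,j)}$ depends only on the sum $i+j$ (with $+1$ exactly on the two intervals of $i+j$ you list) is accurate and is a cleaner packaging of the paper's block structure. The bookkeeping points you flag are also right: every interleaved quadruple $i<k<j<l$ automatically avoids the zero entries and the excluded $(1,n)$ position, and each $4$-subset of vertices yields exactly one interleaving, so there are $\binom{n}{4}$ candidate pairs in total.

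The gap is that the argument stops exactly where the lemma's content begins. Everything up to ``it remains to evaluate $S$'' is a restatement of Lemma \ref{ci-formula} plus a change of variables; the assertion that $\tfrac12\binom{n}{4}+\tfrac12 S$ equals $Z(n)$ is precisely what has to be proved, and you neither compute $S$ nor carry out the block-by-block count you offer as an alternative. This is not a small omission: the paper's entire proof of this lemma consists of that computation, split into four cases according to $n \bmod 4$, with the $+1$ entries partitioned into four or five regions and the $-1$ entries into seven, each region contributing a quadruple nested sum whose closed form must be found and totalled to reach $Z(n)$. Your reformulation via $S=\sum_{i<k<j<l}\sigma(i+j)\sigma(k+l)$ is a plausible route to a shorter computation (one could try to organize the sum by the values $s=i+j$ and $t=k+l$), but until that sum is actually evaluated and shown to equal $2Z(n)-\binom{n}{4}$ in each residue class, the lemma is not established. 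As written, the proposal is a correct plan whose decisive step is asserted rather than performed.
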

\phantom{x}
For a component $a_{(i,j)}$ of $M_n$ satisfying $a_{(i,j)} \ne 0$, 
let $\sigma (a_{(i,j)})$ be the number of components $a_{(k,l)}$ of $M_n$ satisfying $i<k<j<l$ and $a_{(k,l)}=a_{(i,j)}$, 
that is, $i<k<j<l$ and $\left\lfloor \frac{a_{(i,j)} \times a_{(k,l)} +1}{2}\right\rfloor =1$. 
Take a based diagram $(D,H)$ of a based graph $(K_n,H)$ such that $M(D;H)=M_n$. 
Since $\varepsilon (D;H)=\sum _{i<k<j<l} \left\lfloor \frac{a_{(i,j)}\times a_{(k,l)} +1}{2}\right\rfloor$ by Lemma \ref{ci-formula}, the cross-index is obtained by summing $\sigma (a_{(i,j)})$ for all the pairs of $i$ and $j$ satisfying $i+1<j$; that is, $\varepsilon (D;H)=\sum _{i+1<j} \sigma (a_{(i,j)})$.
As it can be seen from the following examples, the calculation $\sigma (a_{(i,j)})$ depends on the location of $a_{(i,j)}$. 
\begin{example}
For $n=14$, $\sigma (a_{(1,7)})$ is obtained as follows (see Figure \ref{a17}): 
\begin{align*}
\sigma (a_{(1,7)})=\sum _{k=2}^{6} \sum _{l=8} ^{14-k} 1 =15.
\end{align*}
\begin{figure}[ht]
\begin{center}
\includegraphics[width=65mm]{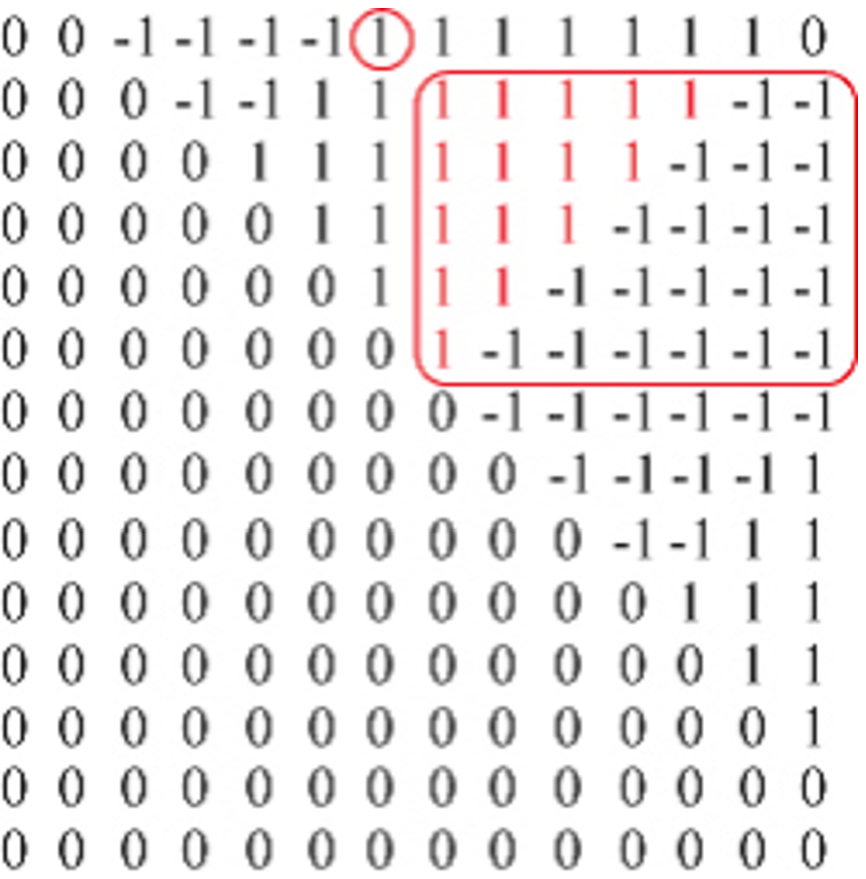}
\caption{The components $a_{(k,l)}$ of $M_n$ satisfying $1<k<7<l$ and $a_{(k,l)}=a_{(1,7)}=1$ are $a_{(2,8)}$, $a_{(2,9)}$, $a_{(2,10)}$, $a_{(2,11)}$, $a_{(2,12)}$, $a_{(3,8)}$, $a_{(3,9)}$, $a_{(3,10)}$, $a_{(3,11)}$, $a_{(4,8)}$, $a_{(4,9)}$, $a_{(4,10)}$, $a_{(5,8)}$, $a_{(5,9)}$ and $a_{(6,8)}$.}
\label{a17}
\end{center}
\end{figure}
\end{example}
\begin{example}
For $n=14$, $\sigma (a_{(2,10)})$ is obtained as follows (see Figure \ref{a210}): 
\begin{align*}
\sigma (a_{(2,10)})=\sum _{l=11}^{11} \sum _{k=3} ^{3} 1 +\sum _{l=13}^{14} \sum _{k=22-l} ^{9} 1 =4.
\end{align*}
\begin{figure}[ht]
\begin{center}
\includegraphics[width=65mm]{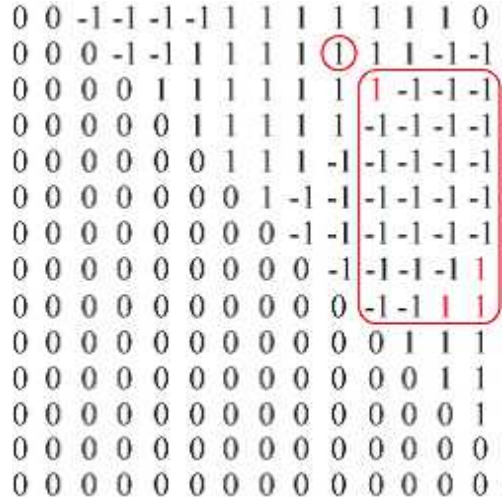}
\caption{The components $a_{(k,l)}$ of $M_n$ satisfying $2<k<10<l$ and $a_{(k,l)}=a_{(2,10)}=1$ are $a_{(3,11)}$, $a_{(8,14)}$, $a_{(9,13)}$ and $a_{(9,14)}$.}
\label{a210}
\end{center}
\end{figure}
\end{example}
\phantom{x}

\noindent We show Lemma \ref{mz-lem}.

%%%%%%%%%%%%%%%%%%%%%%%%%%%%%%%%%%%%%%%%%%%%%%%%%%%%%%%%%%%%%%%%%%%%%%

\phantom{x}
\noindent {\bf Proof of Lemma \ref{mz-lem}.} 
Let $X=\{1,2,\dots ,n\}$ and let $\displaystyle X^2=X\times X$. Let $\displaystyle N=\{(i,j)\in X^2\ |\ a_{(i,j)}=1\}$ and let $\displaystyle S=\{(i,j)\in X^2\ |\ a_{(i,j)}=-1\}$.\\
\ (i) When $n \equiv 0 \pmod 4$: Let 
\begin{align*}
N_1 = & \left\{ (i,j)\in X^2 \ | \  \frac{n+2}{2}-j \le i \le j-2, \ \frac{n+8}{4} \le j \le \frac{n}{2} \right\} , \\
N_2 = & \left\{ (i,j)\in X^2 \ | \ 1 \le i \le n-j, \ \frac{n+2}{2} \le j \le \frac{3n}{4} \right\} , \\
N_3 = & \left\{ (i,j)\in X^2 \ | \ 1 \le i \le n-j, \ \frac{3n+4}{4} \le j \le n-1 \right\} , \\
N_4 = & \left\{ (i,j)\in X^2 \ | \ \frac{3n+2}{2}-j \le i \le j-2, \ \frac{3n+8}{4} \le j \le n \right\} .
\end{align*}
Then $N=N_1 \cup N_2 \cup N_3 \cup N_4$ and $N_s \cap N_t = \emptyset$ for distinct $s, t\in \{1,2,3,4\}$. Let  
\begin{align*}
S_1 = & \left\{ (i,j)\in X^2 \ | \ 1 \le i \le j-2, \ 3 \le j \le \frac{n}{4} \right\} , \\
S_2 = & \left\{ (i,j)\in X^2 \ | \ 1 \le i \le \frac{n}{2}-j, \ \frac{n+4}{4} \le j \le \frac{n-2}{2} \right\} , \\
S_3 = & \left\{ (i,j)\in X^2 \ | \ n-j+1 \le i \le \frac{n}{2}, \ \frac{n+4}{2} \le j \le \frac{3n}{4} \right\} , \\
S_4 = & \left\{ (i,j)\in X^2 \ | \ \frac{n+2}{2} \le i \le j-2, \ \frac{n+6}{2} \le j \le \frac{3n}{4} \right\} , \\
S_5 = & \left\{ (i,j)\in X^2 \ | \ n-j+1 \le i \le \frac{n}{2}, \ \frac{3n+4}{4} \le j \le n-1 \right\} , \\
S_6 = & \left\{ (i,j)\in X^2 \ | \ 2 \le i \le \frac{n}{2}, \ j=n \right\} , \\
S_7 = & \left\{ (i,j)\in X^2 \ | \ \frac{n+2}{2} \le i \le \frac{3n}{2}-j, \ \frac{3n+4}{4} \le j \le n-1 \right\} .
\end{align*}
Then $S=S_1 \cup S_2 \cup \dots \cup S_7$ and $S_s \cap S_t = \emptyset$ for distinct $s, t\in \{1,2,3,4,5,6,7\}$. 
(An example is shown in Figure \ref{m16}.) 
\begin{figure}[ht]
\begin{center}
\includegraphics[width=85mm]{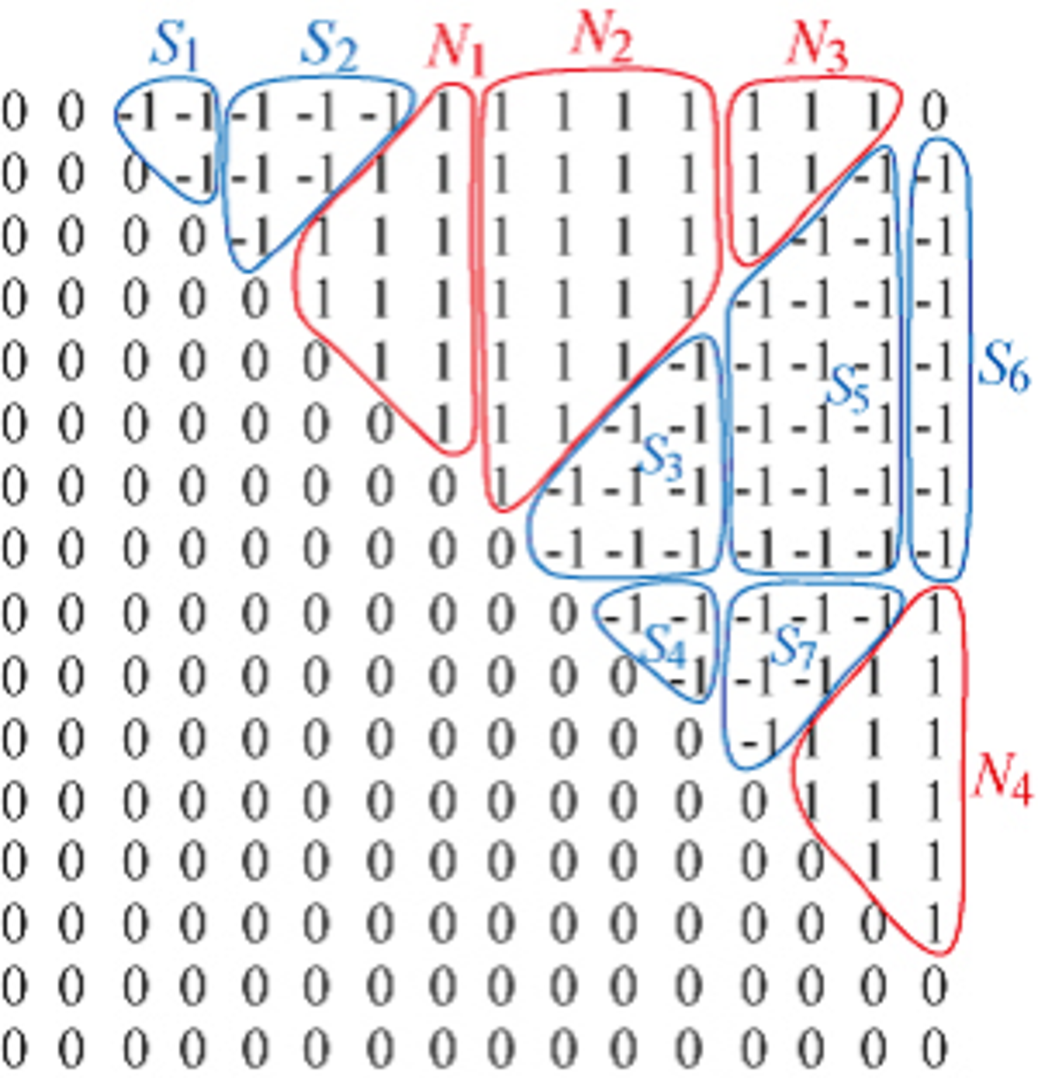}
\caption{Block division for $n=16$. }
\label{m16}
\end{center}
\end{figure}
The sum of $\sigma (a_{(i,j)})$ for all the elements $a_{(i,j)}$ which belong to $N_1$ is: 
\begin{align*}
\sum _{ (i,j)\in N_1} \sigma (a_{(i,j)}) = & \sum _{(i,j)\in N_1} \sum _{k=i+1} ^{j-1} \sum _{l=j+1} ^{n-k} 1\\
 = & \sum _{j=\frac{n+8}{4}} ^{\frac{n}{2}} \sum _{i=\frac{n+2}{2}-j} ^{j-2} \sum _{k=i+1} ^{j-1} \sum _{l=j+1} ^{n-k} 1   = \frac{n^2(n-2)(n-4)}{384}.
\end{align*}
\noindent Similarly, 
\begin{align*}
\sum _{(i,j)\in N_2} \sigma (a_{(i,j)}) = 
& \sum _{j=\frac{n+2}{2}} ^{\frac{3n}{4}} \sum _{i=1} ^{n-j} \left( \sum _{l=\frac{3n+4}{2}-j} ^{n} \sum _{k=\frac{3n+2}{2}-l} ^{j-1} 1 + \sum _{l=j+1} ^{n-i-1} \sum _{k=i+1} ^{n-l} 1 \right) 
=\frac{n(5n-4)(n-4)^2}{1536}, \\
\sum _{(i,j)\in N_3} \sigma (a_{(i,j)}) = 
& \sum _{j=\frac{3n+4}{4}} ^{n-1} \sum _{i=1} ^{n-j} \left( \sum _{l=j+1} ^{n-i-1} \sum _{k=i+1} ^{n-l} 1 + \sum _{l=j+1} ^{n} \sum _{k=\frac{3n+2}{2}-l} ^{j-1} 1 \right) 
=\frac{n(n-4)(n^2-4n+16)}{768}, \\
\sum _{(i,j)\in N_4} \sigma (a_{(i,j)}) = 
& \sum _{j=\frac{3n+8}{4}} ^{n} \sum _{i=\frac{3n+2}{2}-j} ^{j-2} \sum _{l=j+1} ^{n} \sum _{k=i+1} ^{j-1} 1 
=\frac{n^2(n-4)(n-8)}{1536}, 
\end{align*}
\begin{align*}
\sum _{(i,j)\in S_1} \sigma (a_{(i,j)}) = 
&  \sum _{j=3} ^{\frac{n}{4}} \sum _{i=1} ^{j-2} \left( \sum _{k=i+1} ^{j-1} \sum _{l=j+1} ^{\frac{n}{2}-k} 1+\sum_{k=i+1}^{j-1} \sum_{l=n+1-k}^{n} 1 \right) 
=\frac{n(n-4)(n-8)(5n-12)}{6144}, \\
\sum _{(i,j)\in S_2} \sigma (a_{(i,j)}) = 
&  \sum _{j=\frac{n+4}{4}} ^{\frac{n-2}{2}} \sum _{i=1} ^{\frac{n}{2}-j} \left(\sum _{k=i+1} ^{\frac{n-2}{2}-j} \sum _{l=j+1} ^{\frac{n}{2}-k} 1 + \sum _{k=i+1}^{j-1} \sum _{l=n+1-k}^{n} 1\right)
=\frac{n(n-4)(11n^2-44n+32)}{6144}, \\
\sum _{(i,j)\in S_3} \sigma (a_{(i,j)}) = 
& \sum _{j=\frac{n+4}{2}} ^{\frac{3n}{4}} \sum _{i=n-j+1} ^{\frac{n}{2}} \left(\sum _{l=j+1} ^{n} \sum _{k=i+1} ^{\frac{n}{2}} 1 + \sum _{k=\frac{n+2}{2}}^{j-1} \sum _{j+1}^{\frac{3n}{2}-k} 1\right)
=\frac{n(n+4)(n-4)(3n-8)}{1536}, \\
\sum _{ (i,j) \in S_4} \sigma (a_{(i,j)}) = 
& \sum _{j=\frac{n+6}{2}} ^{\frac{3n}{4}} \sum _{i=\frac{n+2}{2}} ^{j-2} \sum _{k=i+1} ^{j-1} \sum _{l=j+1} ^{\frac{3n}{2}-k} 1 
=\frac{n(n-4)(n-8)(3n-20)}{6144}, \\
\sum _{(i,j)\in S_5} \sigma (a_{(i,j)}) = 
& \sum _{j=\frac{3n+4}{4}} ^{n-1} \sum _{i=n-j+1} ^{\frac{n}{2}} \sum _{l=j+1} ^{n} \sum _{k=i+1} ^{\frac{3n}{2}-l} 1 
=\frac{n(n+1)(n-4)^2}{384}, \\
\sum _{(i,j)\in S_6} \sigma (a_{(i,j)}) = & 0, \\
\sum _{(i,j)\in S_7} \sigma (a_{(i,j)}) = 
& \sum _{j=\frac{3n+4}{4}} ^{n-1} \sum _{i=\frac{n+2}{2}} ^{\frac{3n}{2}-j} \sum _{l=j+1} ^{\frac{3n-2}{2}-i} \sum _{k=i+1} ^{\frac{3n}{2}-l} 1 
=\frac{n(n-4)(n-8)(n-12)}{6144}. \\
\end{align*}
Hence 
\begin{align*}
\varepsilon (D; H) = & \sum _{(i,j) \in N} \sigma (a_{(i,j)}) + \sum _{(i,j) \in S} \sigma (a_{(i,j)}) \\
 = & \sum_{t=1}^{4} \sum _{(i,j) \in N_t} \sigma (a_{(i,j)}) + \sum_{t=1}^{7} \sum _{(i,j) \in S_t} \sigma (a_{(i,j)}) \\
 = & \frac{n(n-4)(n-2)^2}{128} + \frac{n(n-4)(n-2)^2}{128} \\
 = & \frac{n(n-4)(n-2)^2}{64} = Z(n).
\end{align*}

%%%%%%%%%%%%%%%%%%%%%%%%%%%%%%%%%%%%%%%%%%%%%%%%%%%%%%%%%%%%%%%%%%

\noindent  (ii) When $n \equiv 1 \pmod 4$: Let 
\begin{align*}
N_1 = & \left\{ (i,j)\in X^2 \ | \  \frac{n+3}{2}-j \le i \le j-2, \ \frac{n+7}{4} \le j \le \frac{n-1}{2}   \right\} , \\
N_2 = & \left\{ (i,j)\in X^2 \ | \  1 \le i \le \frac{n-3}{2}, \  j = \frac{n+1}{2}   \right\} , \\
N_3 = & \left\{ (i,j)\in X^2 \ | \  1 \le i \le n-j, \ \frac{n+5}{2} \le j \le \frac{3n+1}{4}  \right\} , \\
N_4 = & \left\{ (i,j)\in X^2 \ | \  1 \le i \le n-j, \ \frac{3n+5}{4} \le j \le n-1  \right\} , \\
N_5 = & \left\{ (i,j)\in X^2 \ | \  \frac{3n+1}{2}-j \le i \le j-2, \ \frac{3n+5}{4} \le j \le n  \right\} .
\end{align*}
Then $N=N_1 \cup N_2 \cup N_3 \cup N_4 \cup N_5$ and $N_s \cap N_t = \emptyset$ for distinct $s, t\in \{1,2,3,4,5\}$. Let 
\begin{align*}
S_1 = & \left\{ (i,j)\in X^2 \ | \  1 \le i \le j-2, \ 3 \le j \le \frac{n-1}{4}    \right\} , \\
S_2 = & \left\{ (i,j)\in X^2 \ | \  \frac{n+3}{4} \le j \le \frac{n+1}{2}-i, \ 1 \le i \le \frac{n-5}{4}   \right\} , \\
S_3 = & \left\{ (i,j)\in X^2 \ | \  n-j+1 \le i \le \frac{n-1}{2}, \ \frac{n+3}{2} \le j \le \frac{3n-3}{4}  \right\} , \\
S_4 = & \left\{ (i,j)\in X^2 \ | \  \frac{n+1}{2} \le i \le j-2, \ \frac{n+5}{2} \le j \le \frac{3n-3}{4}  \right\} , \\
S_5 = & \left\{ (i,j)\in X^2 \ | \  n-j+1 \le i \le \frac{n-1}{2}, \ \frac{3n+1}{4} \le j \le n-1   \right\}, \\
S_6 = & \left\{ (i,j)\in X^2 \ | \  2 \le i \le \frac{n-1}{2},\ j=n   \right\}, \\
S_7 = & \left\{ (i,j)\in X^2 \ | \  \frac{3n+1}{4} \le j \le \frac{3n-1}{2}-i, \ \frac{n+1}{2} \le i \le \frac{3n-7}{4}  \right\} .
\end{align*}
Then $S=S_1 \cup S_2 \cup \dots \cup S_7$ and $S_s \cap S_t = \emptyset$ for distinct $s, t\in \{1,2,3,4,5,6,7\}$. 
(An example is shown in Figure \ref{m17}.) 
\begin{figure}[ht]
\begin{center}
\includegraphics[width=85mm]{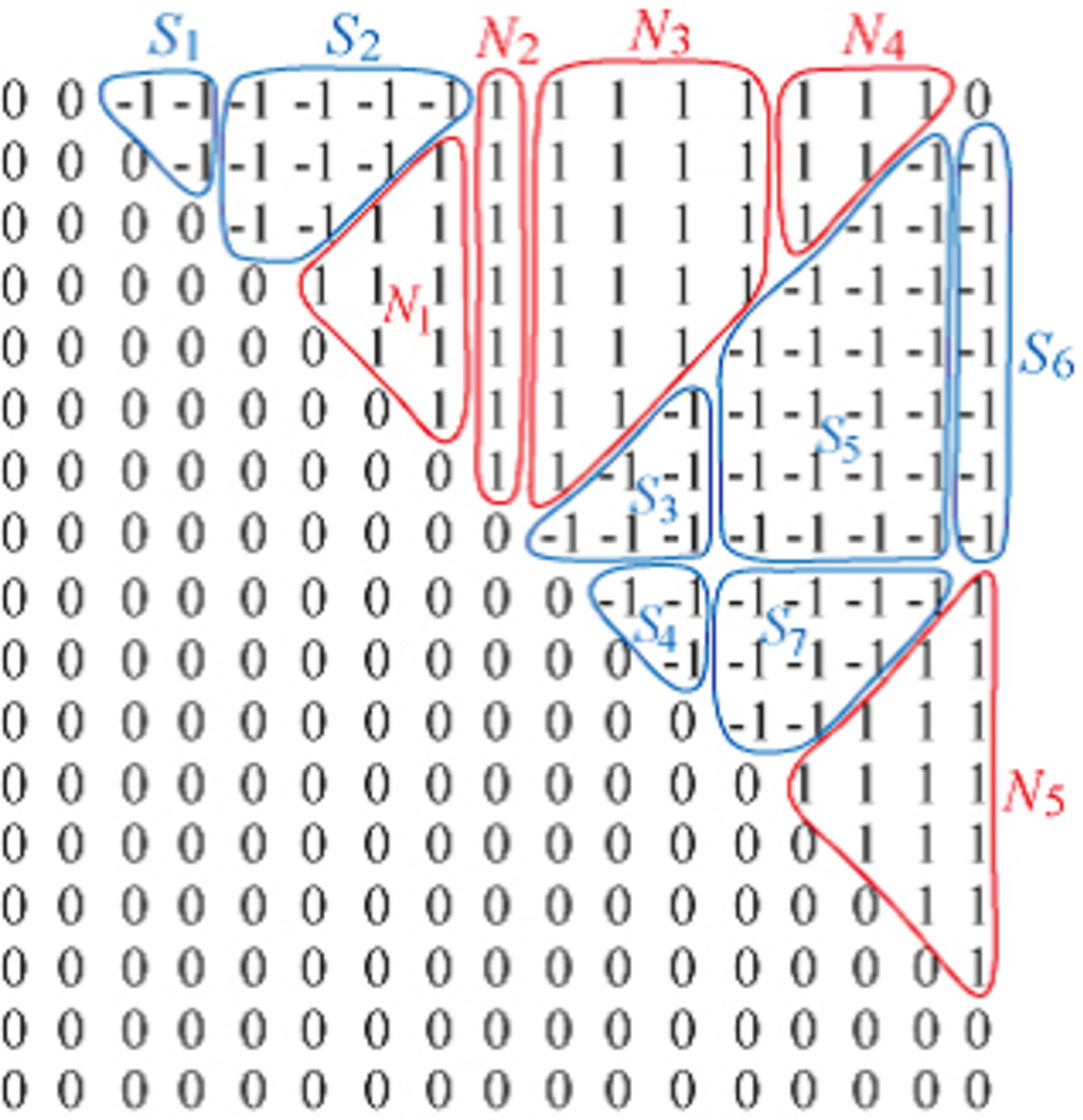}
\caption{Block division for $n=17$. }
\label{m17}
\end{center}
\end{figure}
\begin{align*}
\sum _{(i,j)\in N_1} \sigma (a_{(i,j)}) = & \sum _{j=\frac{n+7}{4}} ^{\frac{n-1}{2}} \sum _{i=\frac{n+3}{2}-j} ^{j-2} \sum _{k=i+1} ^{j-1} \sum _{l=j+1} ^{n-k} 1 
=\frac{(n-1)(n-3)(n-5)^2}{384}, \\
\sum _{(i,j)\in N_2} \sigma (a_{(i,j)}) = & \sum _{i=1} ^{\frac{n-3}{2}} \sum _{k=i+1} ^{\frac{n-3}{2}} \sum _{l=\frac{n+3}{2}} ^{n-k} 1 
=\frac{(n-1)(n-3)(n-5)}{48}, \\
\sum _{(i,j)\in N_3} \sigma (a_{(i,j)}) = & \sum _{j=\frac{n+3}{2}} ^{\frac{3n+1}{4}} \sum _{i=1} ^{n-j} \sum _{l=\frac{3n+3}{2}-j} ^{n} \sum _{k=\frac{3n+1}{2}-l} ^{j-1} 1 
=\frac{(n-1)(5n^3-43n^2+231n-321)}{1536}, \\
\sum _{(i,j)\in N_4} \sigma (a_{(i,j)}) = & \sum _{j=\frac{3n+5}{4}} ^{n-1} \sum _{i=1} ^{n-j} \sum _{l=j+1} ^{n} \sum _{k=\frac{3n+1}{2}-l} ^{j-1} 1 
=\frac{(n-1)(n-5)(n^2-2n+9)}{768}, \\
\sum _{(i,j)\in N_5} \sigma (a_{(i,j)}) = & \sum _{j=\frac{3n+5}{4}} ^{n} \sum _{i=\frac{3n+1}{2}-j} ^{j-2} \sum _{l=j+1} ^{n} \sum _{k=i+1} ^{j-1} 1 
=\frac{(n+3)(n-1)(n-5)^2}{1536}, 
\end{align*}
\begin{align*}
\sum _{(i,j) \in S_1} \sigma (a_{(i,j)}) = 
& \sum _{j=3} ^{\frac{n-1}{4}} \sum _{i=1} ^{j-2} \left( \sum _{k=i+1} ^{j-1} \sum _{l=j+1} ^{\frac{n+1}{2}-k} 1 + \sum _{k=i+1} ^{j-1} \sum _{l=n-k+1} ^{n} 1 \right) 
=\frac{(n-1) (n-5) (n-9) (5n-1)}{6144}, \\
\sum _{(i,j) \in S_2} \sigma (a_{(i,j)}) = 
& \sum _{i=1} ^{\frac{n-5}{4}} \sum _{j=\frac{n+3}{4}} ^{\frac{n+1}{2}-i} \left( \sum _{k=i+1} ^{\frac{n-1}{2}-j} \sum _{l=j+1} ^{\frac{n+1}{2}-k} 1+\sum _{k=i+1} ^{j-1} \sum _{l=n-k+1} ^{n} 1 \right) \\
= & \frac{(n+3)(n-1)(n-5)(11n+13)}{6144}, \\
\sum _{(i,j) \in S_3} \sigma (a_{(i,j)}) = 
& \sum _{j=\frac{n+3}{2}} ^{\frac{3n-3}{4}} \sum _{i=n-j+1} ^{\frac{n-1}{2}}  \left( \sum _{l=j+1} ^{n} \sum _{k=i+1} ^{\frac{n-1}{2}} 1+\sum _{k=\frac{n+1}{2}} ^{j-1} \sum _{l=j+1} ^{\frac{3n-1}{2}-k} 1 \right)  
=\frac{(n-1)^2(n-5)^2}{512}, \\
\sum _{(i,j) \in S_4} \sigma (a_{(i,j)}) = 
& \sum _{j=\frac{n+5}{2}} ^{\frac{3n-3}{4}} \sum _{i=\frac{n+1}{2}} ^{j-2} \sum _{k=i+1} ^{j-1} \sum _{l=j+1} ^{\frac{3n-1}{2}-k} 1 
=\frac{(n-1)(n-5)(n-9)(3n-7)}{6144}, \\
\sum _{(i,j) \in S_5} \sigma (a_{(i,j)}) = 
& \sum _{j=\frac{3n+1}{4}} ^{n-1} \sum _{i=n+1-j} ^{\frac{n-1}{2}} \sum _{l=j+1} ^{n} \sum _{k=i+1} ^{\frac{3n-1}{2}-l} 1 
=\frac{(n+3)(n-1)(n-2)(n-5)}{384}, \\
\sum _{ (i,j) \in S_6} \sigma (a_{(i,j)}) = & 0, \\
\sum _{(i,j) \in S_7} \sigma (a_{(i,j)}) = 
& \sum _{i=\frac{n+1}{2}} ^{\frac{3n-7}{4}} \sum _{j=\frac{3n+1}{4}} ^{\frac{3n-1}{2}-i} \sum _{l=j+1} ^{\frac{3n-3}{2}-i} \sum _{k=i+1} ^{\frac{3n-1}{2}-l} 1 
=\frac{(n+3)(n-1)(n-5)(n-9)}{6144}.
\end{align*}
Hence 
\begin{align*}
\varepsilon (D; H) = & \sum _{(i,j) \in N} \sigma (a_{(i,j)}) + \sum _{(i,j) \in S} \sigma (a_{(i,j)}) \\
 = & \sum_{t=1}^{5} \sum _{(i,j) \in N_t} \sigma (a_{(i,j)}) + \sum_{t=1}^{7} \sum _{(i,j) \in S_t} \sigma (a_{(i,j)}) \\
 = & \frac{(n-1)^2 (n^2-6n+13)}{128} + \frac{(n-5)(n-1)^3}{128} \\
 = & \frac{(n-1)^2(n-3)^2}{64} = Z(n).
\end{align*}

%%%%%%%%%%%%%%%%%%%%%%%%%%%%%%%%%%%%%%%%%%%%%%%%%%%%%%%%%%%%%%

\noindent  (iii) When $n \equiv 2 \pmod 4$: Let
\begin{align*}
N_1 = & \left\{ (i,j)\in X^2 \ | \  \frac{n+2}{2}-j \le i \le j-2, \ \frac{n+6}{4} \le j \le \frac{n}{2}   \right\} , \\
N_2 = & \left\{ (i,j)\in X^2 \ | \  1 \le i \le n-j, \ \frac{n+2}{2} \le j \le \frac{3n+2}{4}  \right\} , \\
N_3 = & \left\{ (i,j)\in X^2 \ | \  1 \le i \le n-j, \ \frac{3n+6}{4} \le j \le n-1  \right\} , \\
N_4 = & \left\{ (i,j)\in X^2 \ | \  \frac{3n+2}{2}-j \le i \le j-2, \ \frac{3n+6}{4} \le j \le n  \right\} .
\end{align*}
Then $N=N_1 \cup N_2 \cup N_3 \cup N_4$ and $N_s \cap N_t = \emptyset$ for distinct $s, t\in \{1,2,3,4\}$. 
Let $S= \{ a_{(i,j)} \ | \ a_{(i,j)}=-1 \}$, and let 
\begin{align*}
S_1 = & \left\{ (i,j)\in X^2 \ | \  1 \le i \le j-2, \ 3 \le j \le \frac{n-2}{4}    \right\} , \\
S_2 = & \left\{ (i,j)\in X^2 \ | \  \frac{n+2}{4} \le j \le \frac{n}{2}-i, \ 1 \le i \le \frac{n-6}{4}   \right\} , \\
S_3 = & \left\{ (i,j)\in X^2 \ | \  n-j+1 \le i \le \frac{n}{2}, \ \frac{n+4}{2} \le j \le \frac{3n-2}{4}  \right\} , \\
S_4 = & \left\{ (i,j)\in X^2 \ | \  \frac{n+2}{2} \le i \le j-2, \ \frac{n+6}{2} \le j \le \frac{3n-2}{4}  \right\} , \\
S_5 = & \left\{ (i,j)\in X^2 \ | \  n-j+1 \le i \le \frac{n}{2}, \ \frac{3n+10}{4} \le j \le n-1   \right\}\\
S_6 = & \left\{ (i,j)\in X^2 \ | \  2 \le i \le \frac{n}{2},\ j=n   \right\}\\
S_7 = & \left\{ (i,j)\in X^2 \ | \  \frac{3n+2}{4} \le j \le \frac{3n}{2}-i, \ \frac{n+2}{2} \le i \le \frac{3n-6}{4}  \right\} .
\end{align*}
Then $S=S_1 \cup S_2 \cup \dots \cup S_7$ and $S_s \cap S_t = \emptyset$ for distinct $s, t\in \{1,2,3,4,5,6,7\}$. 
(An example is shown in Figure \ref{m18}.) 
\begin{figure}[ht]
\begin{center}
\includegraphics[width=90mm]{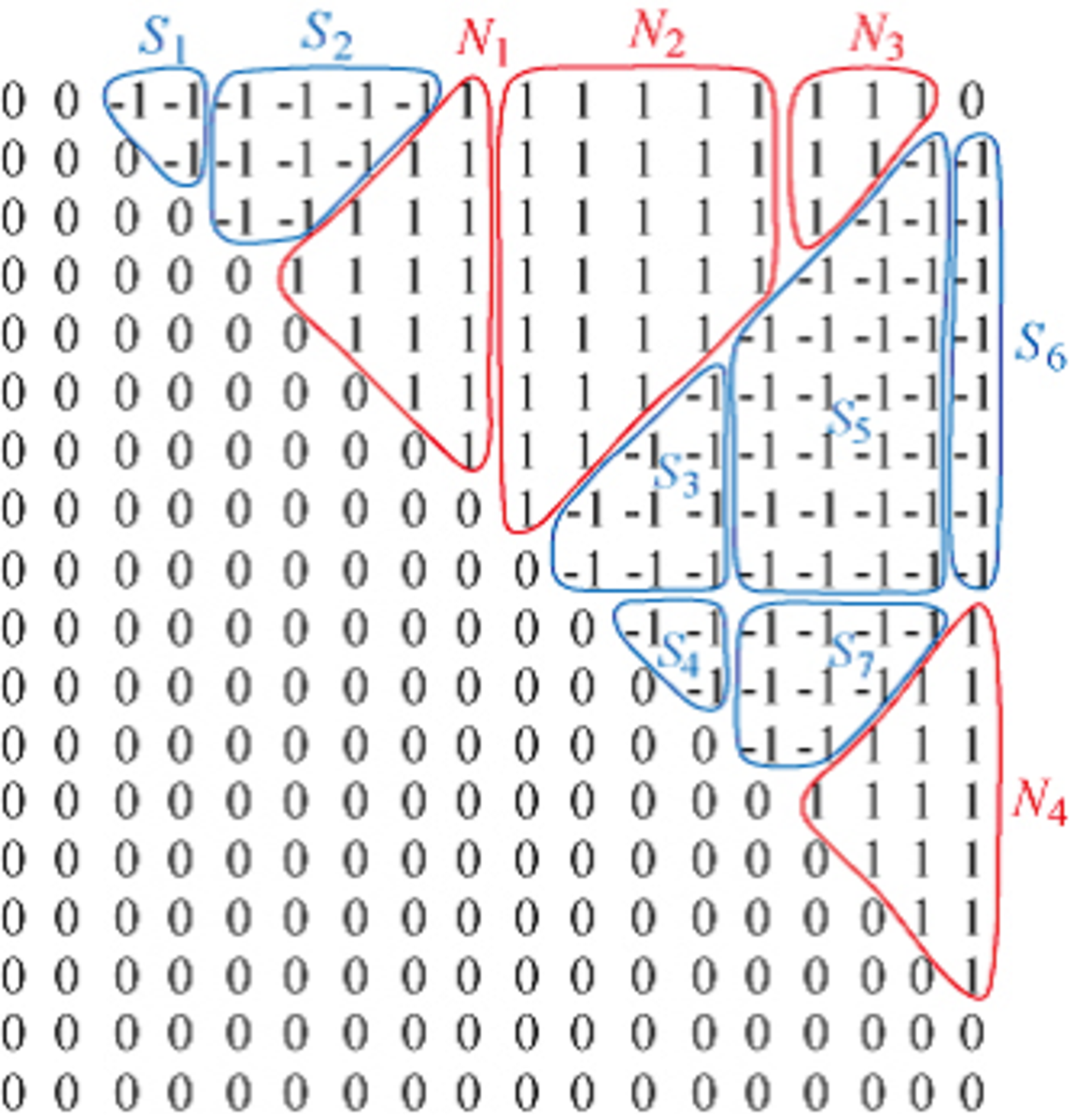}
\caption{Block division for $n=18$. }
\label{m18}
\end{center}
\end{figure}

\begin{align*}
\sum _{ (i,j) \in N_1} \sigma (a_{(i,j)}) = 
& \sum _{j=\frac{n+6}{4}} ^{\frac{n}{2}} \sum _{i=\frac{n+2}{2}-j} ^{j-2} \sum _{k=i+1} ^{j-1} \sum _{l=j+1} ^{n-k} 1 
=\frac{(n+2)(n-2)(n^2-6n+12)}{384}, \\
\sum _{ (i,j) \in N_2} \sigma (a_{(i,j)}) = 
& \sum _{j=\frac{n+2}{2}} ^{\frac{3n+2}{4}} \sum _{i=1} ^{n-j} \left( \sum _{l=j+1} ^{n-i-1} \sum _{k=i+1} ^{n-l} 1  +\sum _{l=\frac{3n+4}{2}-j} ^{n} \sum _{k=\frac{3n+2}{2}-l} ^{j-1} 1 \right) \\
= & \frac{(n+2)(n-2)(5n^2-36n+84)}{1536}, \\
\sum _{ (i,j) \in N_3} \sigma (a_{(i,j)}) = 
& \sum _{j=\frac{3n+6}{4}} ^{n-1} \sum _{i=1} ^{n-j} \left( \sum _{l=j+1} ^{n-i-1} \sum _{k=i+1} ^{n-l} 1 + \sum _{l=j+1} ^{n} \sum _{k=\frac{3n+2}{2}-l} ^{j-1} 1  \right) \\
= & \frac{(n-2)(n-6)(n^2-4n+12)}{768}, \\
\sum _{ (i,j) \in N_4} \sigma (a_{(i,j)}) = 
& \sum _{j=\frac{3n+6}{4}} ^{n-1} \sum _{i=\frac{3n+4}{2}-j} ^{j-2} \sum _{l=j+1} ^{n} \sum _{k=i+1} ^{j-1} 1 
=\frac{(n-2)(n+2)(n-6)^2}{1536}, 
\end{align*}
\begin{align*}
\sum _{ (i,j) \in S_1} \sigma (a_{(i,j)}) = 
& \sum _{j=3} ^{\frac{n-2}{4}} \sum _{i=1} ^{j-2} \left( +\sum _{k=i+1} ^{j-1} \sum _{l=j+1} ^{\frac{n}{2}-k} 1 + \sum _{k=i+1} ^{j-1} \sum _{l=n-k+1} ^{n} 1 \right) 
=\frac{(n-2)(n-6)(n-10)(5n-6)}{6144}, \\
\sum _{ (i,j) \in S_2} \sigma (a_{(i,j)}) = 
& \sum _{i=1} ^{\frac{n-6}{4}} \sum _{j=\frac{n+2}{4}} ^{\frac{n}{2}-i} \left( \sum _{k=i+1} ^{\frac{n-2}{2}-j} \sum _{l=j+1} ^{\frac{n}{2}-k} 1+\sum _{k=i+1} ^{j-1} \sum _{l=n-k+1} ^{n} 1 \right)  
=\frac{(n+2)(n-2)(n-6)(11n+2)}{6144}, \\
\sum _{ (i,j) \in S_3} \sigma (a_{(i,j)}) = 
& \sum _{j=\frac{n+4}{2}} ^{\frac{3n-2}{4}} \sum _{i=n-j+1} ^{\frac{n}{2}}  \left( \sum _{l=j+1} ^{n} \sum _{k=i+1} ^{\frac{n}{2}} 1+\sum _{k=\frac{n+2}{2}} ^{j-1} \sum _{l=j+1} ^{\frac{3n}{2}-k} 1 \right)  
=\frac{(n+2)(n-2)(n-6)(3n-2)}{1536}, \\
\sum _{ (i,j) \in S_4} \sigma (a_{(i,j)}) = 
& \sum _{j=\frac{n+6}{2}} ^{\frac{3n-2}{4}} \sum _{i=\frac{n+2}{2}} ^{j-2} \sum _{k=i+1} ^{j-1} \sum _{l=j+1} ^{\frac{3n}{2}-k} 1 
=\frac{(n-2)(n-6)(n-10)(3n-10)}{6144}, \\
\sum _{ (i,j) \in S_5} \sigma (a_{(i,j)}) = 
& \sum _{j=\frac{3n+2}{4}} ^{n} \sum _{i=n-j+1} ^{\frac{n}{2}} \sum _{l=j+1} ^{n} \sum _{k=i+1} ^{\frac{3n}{2}-l} 1 
=\frac{n(n+2)(n-2)(n-4)}{384}, \\
\sum _{ (i,j) \in S_6} \sigma (a_{(i,j)}) = & 0, \\
\sum _{ (i,j) \in S_7} \sigma (a_{(i,j)}) = 
& \sum _{i=\frac{n+2}{2}} ^{\frac{3n-6}{4}} \sum _{j=\frac{3n+2}{4}} ^{\frac{3n}{2}-i} \sum _{l=j+1} ^{\frac{3n-2}{2}-i} \sum _{k=i+1} ^{\frac{3n}{2}-l} 1 
=\frac{(n+2)(n-2)(n-6)(n-10)}{6144}.
\end{align*}
Hence 
\begin{align*}
\varepsilon (D; H) = & \sum _{(i,j) \in N} \sigma (a_{(i,j)}) + \sum _{(i,j) \in S} \sigma (a_{(i,j)}) \\
 = & \sum_{t=1}^{4} \sum _{(i,j) \in N_t} \sigma (a_{(i,j)}) + \sum_{t=1}^{7} \sum _{(i,j) \in S_t} \sigma (a_{(i,j)}) \\
 = & \frac{n^4-8n^3+20n^2-32}{128} + \frac{(n-2)(n^3-6n^2+8n-16)}{128} \\
 = & \frac{n(n-4)(n-2)^2}{64} = Z(n).
\end{align*}

%%%%%%%%%%%%%%%%%%%%%%%%%%%%%%%%%%%%%%%%%%%%%%%%%%%%%%%%%%%%%%

\noindent  (iv) When $n \equiv 3 \pmod 4$: Let 
\begin{align*}
N_1 = & \left\{ (i,j)\in X^2 \ | \  \frac{n+3}{2}-j \le i \le j-2, \ \frac{n+9}{4} \le j \le \frac{n-1}{2}   \right\} , \\
N_2 = & \left\{ (i,j)\in X^2 \ | \  1 \le i \le \frac{n-3}{2}, \  j = \frac{n+1}{2}   \right\} , \\
N_3 = & \left\{ (i,j)\in X^2 \ | \  1 \le i \le n-j, \ \frac{n+5}{2} \le j \le \frac{3n-1}{4}  \right\} , \\
N_4 = & \left\{ (i,j)\in X^2 \ | \  1 \le i \le n-j, \ \frac{3n+3}{4} \le j \le n-1  \right\} , \\
N_5 = & \left\{ (i,j)\in X^2 \ | \  \frac{3n+1}{2}-j \le i \le j-2, \ \frac{3n+7}{4} \le j \le n  \right\} .
\end{align*}
Then $N=N_1 \cup N_2 \cup \dots \cup N_5$ and $N_s \cap N_t = \emptyset$ for distinct $s, t\in \{1,2,3,4,5\}$. Let
\begin{align*}
S_1 = & \left\{ (i,j)\in X^2 \ | \  1 \le i \le j-2, \ 3 \le j \le \frac{n+1}{4}    \right\} , \\
S_2 = & \left\{ (i,j)\in X^2 \ | \  \frac{n+5}{4} \le j \le \frac{n+1}{2}-i, \ 1 \le i \le \frac{n-3}{4}   \right\} , \\
S_3 = & \left\{ (i,j)\in X^2 \ | \  n-j+1 \le i \le \frac{n-1}{2}, \ \frac{n+3}{2} \le j \le \frac{3n-1}{4}  \right\} , \\
S_4 = & \left\{ (i,j)\in X^2 \ | \  \frac{n+1}{2} \le i \le j-2, \ \frac{n+5}{2} \le j \le \frac{3n-1}{4}  \right\} , \\
S_5 = & \left\{ (i,j)\in X^2 \ | \  n-j+1 \le i \le \frac{n-1}{2}, \ \frac{3n+3}{4} \le j \le n-1   \right\}, \\
S_6 = & \left\{ (i,j)\in X^2 \ | \  2 \le i \le \frac{n-1}{2},\ j=n   \right\}, \\
S_7 = & \left\{ (i,j)\in X^2 \ | \  \frac{3n+3}{4} \le j \le \frac{3n-1}{2}-i, \ \frac{n+1}{2} \le i \le \frac{3n-5}{4}  \right\} .
\end{align*}
Then $S=S_1 \cup S_2 \cup \dots \cup S_7$ and $S_s \cap S_t = \emptyset$ for distinct $s, t\in \{1,2,3,4,5,6,7\}$. 
(An example is shown in Figure \ref{m19}.) 
\begin{figure}[ht]
\begin{center}
\includegraphics[width=85mm]{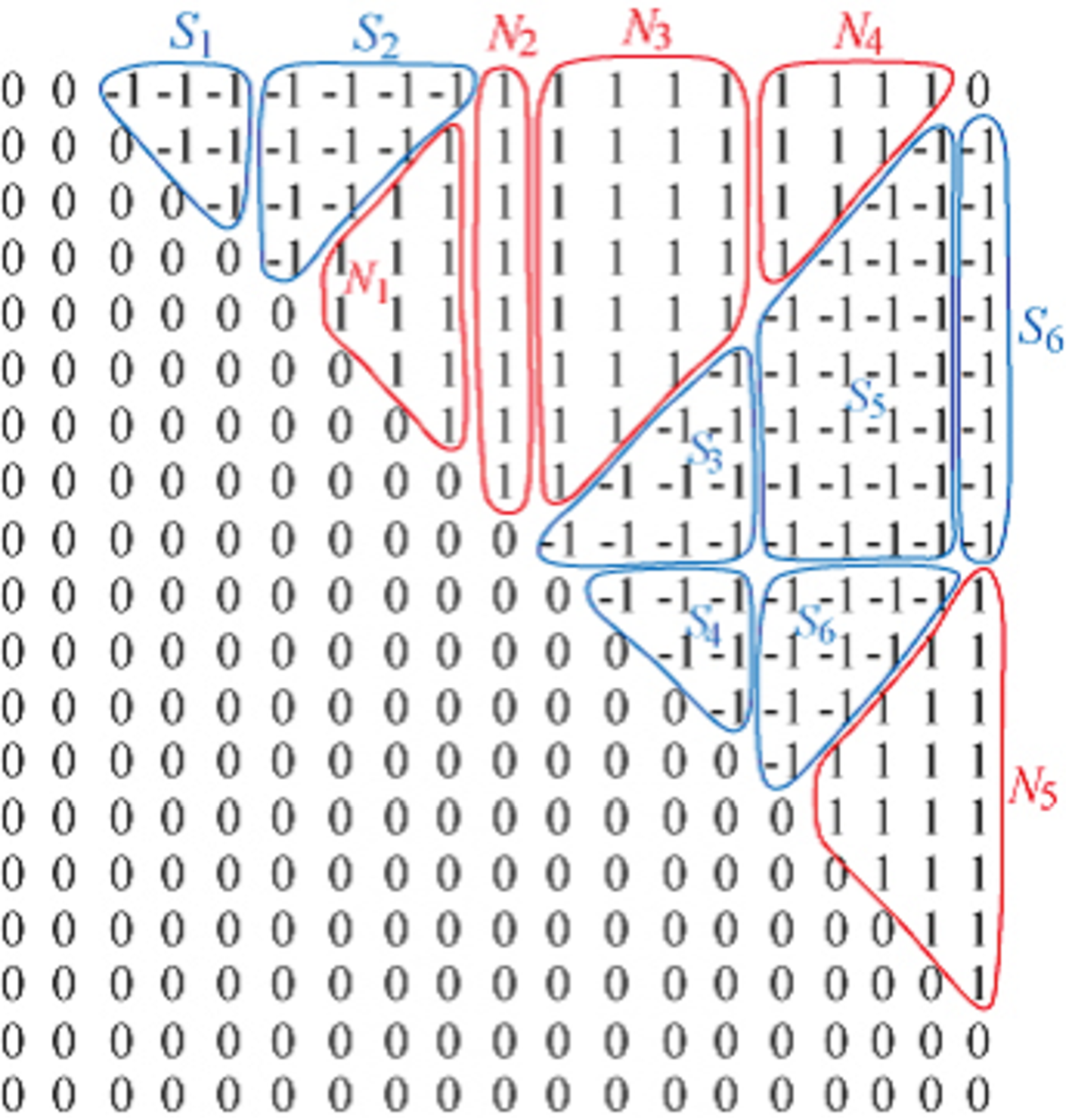}
\caption{Block division for $n=19$. }
\label{m19}
\end{center}
\end{figure}

\begin{align*}
\sum _{ (i,j) \in N_1} \sigma (a_{(i,j)}) = & \sum _{j=\frac{n+9}{4}} ^{\frac{n-1}{2}} \sum _{i=\frac{n+3}{2}-j} ^{j-2} \sum _{k=i+1} ^{j-1} \sum _{l=j+1} ^{n-k} 1 
=\frac{(n-3)(n-7)(n^2-4n+7)}{384}, \\
\sum _{ (i,j) \in N_2} \sigma (a_{(i,j)}) = & \sum _{i=1} ^{\frac{n-3}{2}} \sum _{k=i+1} ^{\frac{n-3}{2}} \sum _{l=\frac{n+3}{2}} ^{n-k} 1 
=\frac{(n-1)(n-3)(n-5)}{48}, \\
\sum _{ (i,j) \in N_3} \sigma (a_{(i,j)}) = & \sum _{j=\frac{n+3}{2}} ^{\frac{3n-1}{4}} \sum _{i=1} ^{n-j} \sum _{l=\frac{3n+3}{2}-j} ^{n} \sum _{k=\frac{3n+1}{2}-l} ^{j-1} 1 
=\frac{(n-3)(5n^3-41n^2+175n-163)}{1536}, \\
\sum _{ (i,j) \in N_4} \sigma (a_{(i,j)}) = & \sum _{j=\frac{3n+3}{4}} ^{n-1} \sum _{i=1} ^{n-j} \sum _{l=j+1} ^{n} \sum _{k=\frac{3n+1}{2}-l} ^{j-1} 1 
=\frac{(n+1)(n-3)(n^2-2n+13)}{768}, \\
\sum _{ (i,j) \in N_5} \sigma (a_{(i,j)}) = & \sum _{j=\frac{3n+7}{4}} ^{n} \sum _{i=\frac{3m+1}{2}-j} ^{j-2} \sum _{l=j+1} ^{n} \sum _{k=i+1} ^{j-1} 1 
=\frac{(n-3)(n-7)(n+1)^2}{1536}, 
\end{align*}
\begin{align*}
\sum _{ (i,j) \in S_1} \sigma (a_{(i,j)}) = 
& \sum _{j=3} ^{\frac{n+1}{4}} \sum _{i=1} ^{j-2} \left( \sum _{k=i+1} ^{j-1} \sum _{l=j+1} ^{\frac{n+1}{2}-k} 1 + \sum _{k=i+1} ^{j-1} \sum _{l=n-k+1} ^{n} 1 \right) 
=\frac{(n+1) (n-3) (n-7) (5n-7)}{6144}, \\
\sum _{ (i,j) \in S_2} \sigma (a_{(i,j)}) = 
& \sum _{i=1} ^{\frac{n-3}{4}} \sum _{j=\frac{n+5}{4}} ^{\frac{n+1}{2}-i} \left( \sum _{k=i+1} ^{\frac{n-1}{2}-j} \sum _{l=j+1} ^{\frac{n+1}{2}-k} 1+\sum _{k=i+1} ^{j-1} \sum _{l=n-k+1} ^{n} 1 \right)  
=\frac{(n-3)(n+1)(11n^2-22n-1)}{6144}, \\
\sum _{ (i,j) \in S_3} \sigma (a_{(i,j)}) = 
& \sum _{j=\frac{n+3}{2}} ^{\frac{3n-1}{4}} \sum _{i=n-j+1} ^{\frac{n-1}{2}}  \left( \sum _{l=j+1} ^{n} \sum _{k=i+1} ^{\frac{n-1}{2}} 1+\sum _{k=\frac{n+1}{2}} ^{j-1} \sum _{l=j+1} ^{\frac{3n-1}{2}-k} 1 \right)  
=\frac{(n+1)(n-3)^3}{512}, \\
\sum _{ (i,j) \in S_4} \sigma (a_{(i,j)}) = 
& \sum _{j=\frac{n+5}{2}} ^{\frac{3n-1}{4}} \sum _{i=\frac{n+1}{2}} ^{j-2} \sum _{k=i+1} ^{j-1} \sum _{l=j+1} ^{\frac{3n-1}{2}-k} 1 
=\frac{(n+1)(n-3)(n-7)(3n-17)}{6144}, \\
\sum _{ (i,j) \in S_5} \sigma (a_{(i,j)}) = 
& \sum _{j=\frac{3n+3}{4}} ^{n-1} \sum _{i=n-j+1} ^{\frac{n-1}{2}} \sum _{l=j+1} ^{n} \sum _{k=i+1} ^{\frac{3n-1}{2}-l} 1 
=\frac{(n+1)(n-1)(n-3)(n-5)}{384}, \\
\sum _{ (i,j) \in S_6} \sigma (a_{(i,j)}) = &  0, \\
\sum _{ (i,j) \in S_7} \sigma (a_{(i,j)}) = 
& \sum _{i=\frac{n+1}{2}} ^{\frac{3n-5}{4}} \sum _{j=\frac{3n+3}{4}} ^{\frac{3n-1}{2}-i} \sum _{l=j+1} ^{\frac{3n-3}{2}-i} \sum _{k=i+1} ^{\frac{3n-1}{2}-l} 1 
=\frac{(n+1)(n-3)(n-7)(n-11)}{6144}.
\end{align*}
Hence 
\begin{align*}
\varepsilon (D; H) = & \sum _{(i,j) \in N} \sigma (a_{(i,j)}) + \sum _{(i,j) \in S} \sigma (a_{(i,j)}) \\
 = & \sum_{t=1}^{5} \sum _{(i,j) \in N_t} \sigma (a_{(i,j)}) + \sum_{t=1}^{7} \sum _{(i,j) \in S_t} \sigma (a_{(i,j)}) \\
 = & \frac{(n-3)^2 (n^2-2n+5)}{128} + \frac{(n+1)(n-3)^3}{128} \\
 = & \frac{(n-1)^2(n-3)^2}{64} = Z(n).
\end{align*}

\hfill$\square$

\phantom{x}

\noindent Examples of based diagrams $(D; H)$ of $(K_n ; H)$ satisfying $\varepsilon (D; H) = Z(n)$ are shown in Figures \ref{K13} and \ref{K14} for $n=13$ and $14$. 
\begin{figure}[ht]
\begin{center}
\includegraphics[width=100mm]{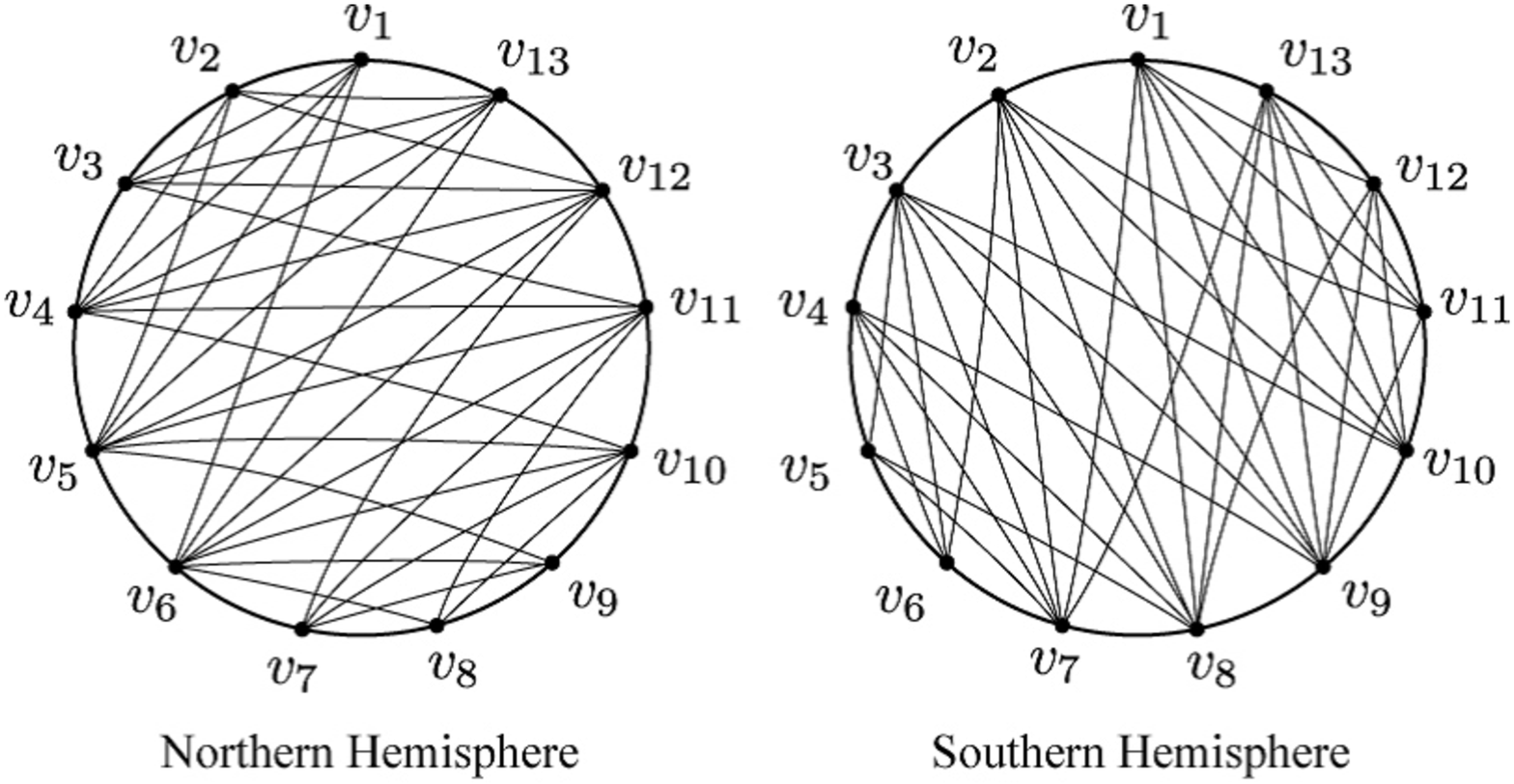}
\caption{An optimal based diagram $(D; H)$ of $(K_{13}; H)$ such that $M(D; H)=M_13$. }
\label{K13}
\end{center}
\end{figure}

\begin{figure}[ht]
\begin{center}
\includegraphics[width=100mm]{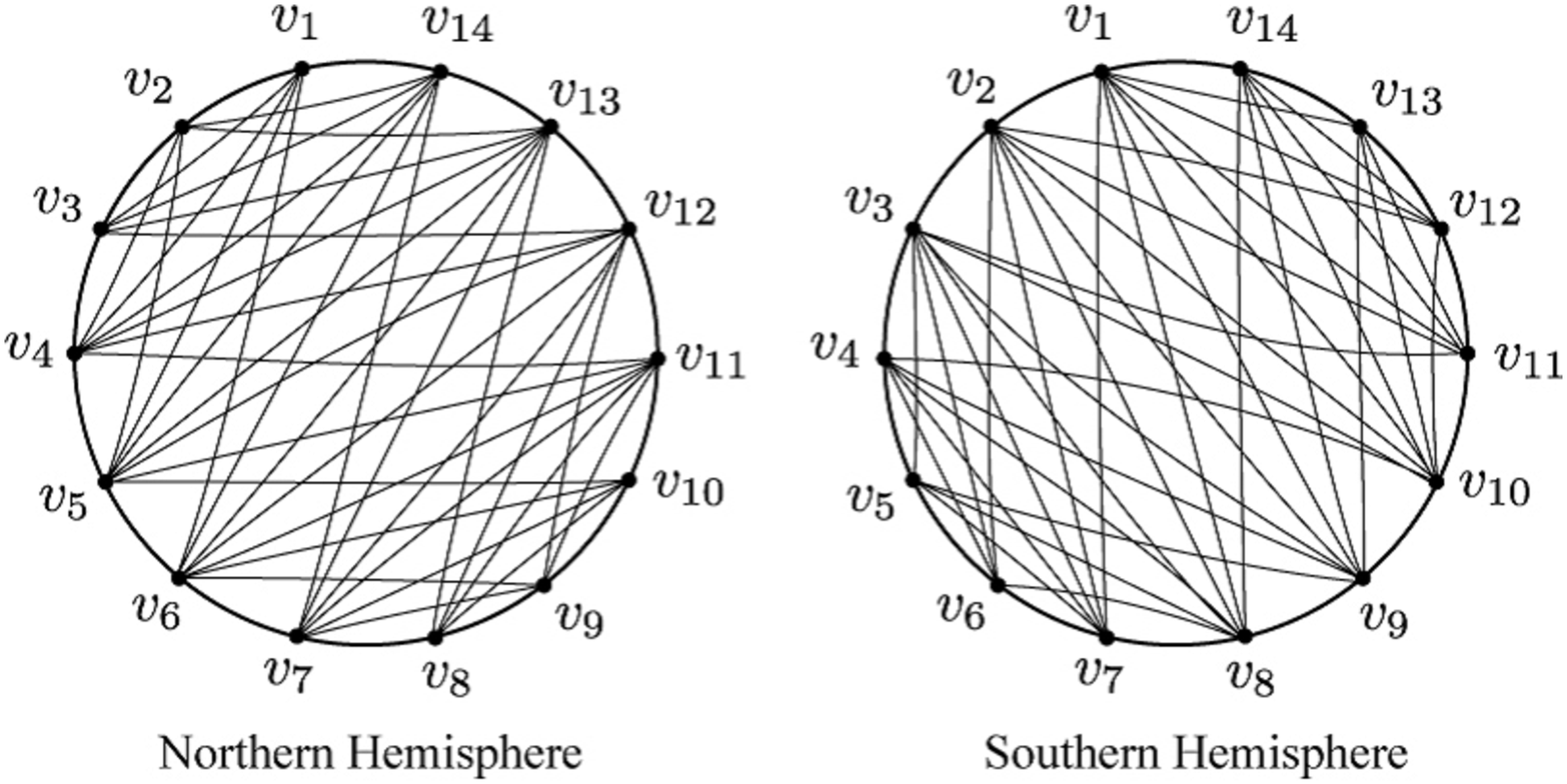}
\caption{An optimal based diagram $(D; H)$ of $(K_{14}; H)$ such that $M(D; H)=M_{14}$. }
\label{K14}
\end{center}
\end{figure}

\section{Proof of Theorem \ref{main-thm}}
In this section, the proof of Theorem \ref{main-thm} is given. 
Let $K_n$ be a complete graph with vertices $v_1, v_2, \dots ,v_n$. 
Let $H=v_1v_2\dots v_nv_1$ be a hamiltonian cycle of $K_n$. 
Let $(D; H)$ be a based diagram of $(K_n; H)$. 
We assume that the vertices $v_1, v_2, \dots ,v_n$ lay on $H$ ($\subset D$) clockwisely in this order. 
Let $E(D)$ and $E(H)$ be the set of all the edge diagrams in $D$ and $H$, respectively. 
For $e\in E(D)$, let $c_D(e)$ be the number of edge diagrams in $D$ which intersect with ${\rm Int}(e)$, where ${\rm Int}(e)$ is the open arc which is obtained by removing the end points of $e$. We remark that  $c_D(e)=0$ for $e\in E(H)$. 
The following lemmas are helpful for the proof of Theorem \ref{main-thm}.

\begin{lemma}([\cite{AAFRS1}, Theorem 21]) For any odd integer $n \ge 13$ and based diagram $(D; H)$ of $(K_n ; H)$, if $(D;H)$ is optimal, i.e., $c(D;H)=Z(n)$, then for any edge diagram $e\in E(D)\setminus E(H)$, we have $c_D(e)\ne 0$. 
\label{no-free}
\end{lemma}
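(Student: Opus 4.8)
The plan is to argue by contradiction, producing from a hypothetical free chord a strictly better hamiltonian-cycle based diagram and thereby contradicting the minimality $\varepsilon(K_n;H)=Z(n)$ recorded above. So suppose $(D;H)$ is optimal, i.e. $c(D;H)=Z(n)$, but some chord $e=e_{(i,j)}$, say on the Northern Hemisphere, has $c_D(e)=0$. By Lemma \ref{ci-formula}, $c_D(e)$ equals the number of chords $e_{(k,l)}$ interleaving $e$ (those with $i<k<j<l$ or $k<i<l<j$) that lie on the same side as $e$; hence freeness says exactly that every chord interleaving $e$ lies on the Southern Hemisphere. In matrix terms, writing $M(D;H)=(a_{(i,j)})$, this means $a_{(k,l)}=-a_{(i,j)}$ for every such interleaving pair.

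Next I would exploit the way $e$ partitions the vertices. Let $I=\{i+1,\dots,j-1\}$ (the inside arc, $p:=|I|$) and $O=\{j+1,\dots,i-1\}$ (the outside arc, $q:=|O|=n-2-p$), and classify every chord as $AA$ (both endpoints in $A:=\{i,\dots,j\}$), $BB$ (both in $B:=\{j,\dots,i\}$), or $AB$ (one endpoint in $I$, one in $O$). A short check shows no $AA$ chord interleaves any $BB$ chord, and that the $AB$ chords are precisely the chords interleaving $e$; by freeness all $pq$ of them lie on the Southern Hemisphere. Because $e$ and the two arcs of $H$ are uncrossed, the $AA$ chords together with $e$ form a 2-page diagram of $K_{p+2}$ based on the hamiltonian cycle $v_iv_{i+1}\cdots v_jv_i$, and similarly for $BB$; thus the crossings split cleanly as $c(D)=C_{AA}+C_{BB}+C_{AB}$, and the $AB$ chords, all on one page, contribute $\binom{p}{2}\binom{q}{2}$ mutual crossings, since two $AB$ chords cross iff their inside endpoints and their outside endpoints occur in the same relative order.

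The source of the contradiction is that concentrating all $AB$ chords on one page is wasteful. I would move a carefully chosen family $X$ of these chords to the Northern Hemisphere (the side of $e$) and evaluate the change with Lemma \ref{ci-formula}: writing $Y_S,Y_N$ for the chords remaining on the South and those already on the North, the change in crossing number is $\Delta=|X|+\mathrm{int}(X,Y_N)-\mathrm{int}(X,Y_S)$, where $\mathrm{int}$ counts interleaving pairs and the term $|X|$ accounts for the new crossings each moved chord makes with $e$. Choosing $X$ to be a balanced half of the $AB$ chords, the removed quantity $\mathrm{int}(X,Y_S)\ge\mathrm{int}(X,AB\setminus X)$ is quadratic in $p$ and $q$, of order $p^2q^2$, whereas $|X|$ is only of order $pq$; for $\min(p,q)\ge 2$ this forces $\Delta<0$, producing a diagram with fewer than $Z(n)$ crossings, the desired contradiction.

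The main obstacle is twofold. First, the move also alters the $AB$–$AA$ and $AB$–$BB$ crossings, so the family $X$ must be chosen so that $\mathrm{int}(X,Y_N)$ stays under control; this is a bookkeeping exercise with the matrix formula but must be done with care. Second, and more seriously, the quadratic saving collapses when $\min(p,q)=1$: then $e=e_{(i,i+2)}$ is a short chord whose interleaving chords all emanate from the single inside vertex $v_{i+1}$ and therefore never cross one another, so no re-coloring of them helps. This degenerate short-chord case is the heart of the statement, and it is exactly where the hypotheses that $n$ is odd and $n\ge 13$ enter; I would treat it by deleting the pocket vertex $v_{i+1}$ to obtain an optimal 2-page diagram of $K_{n-1}$ based on the hamiltonian cycle through $v_iv_{i+2}$ and comparing crossing counts with the known value $Z(n-1)$, which is the argument carried out in ([\cite{AAFRS1}, Theorem 21]).
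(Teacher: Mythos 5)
The first thing to note is that the paper does not prove this statement at all: Lemma \ref{no-free} is imported verbatim from [\cite{AAFRS1}, Theorem 21], where it is established using the $(\le k)$-edge machinery that underlies the proof of $\nu_2(K_n)=Z(n)$. So there is no internal proof to compare against, and your attempt has to stand on its own as a proof of the cited theorem. As written, it does not: it is a plan with its two hardest steps left open, and both are genuine gaps rather than bookkeeping.

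First, the local-improvement step. Your inequality $\Delta=|X|+\mathrm{int}(X,Y_N)-\mathrm{int}(X,Y_S)$ is correct, but the conclusion $\Delta<0$ is drawn by comparing only $|X|=O(pq)$ against $\mathrm{int}(X,AB\setminus X)=O(p^2q^2)$, while the term $\mathrm{int}(X,Y_N)$ --- the new crossings of the recolored $AB$ chords with whatever $AA$ and $BB$ chords already sit on the Northern Hemisphere --- is dismissed as ``a bookkeeping exercise.'' It is the crux. A single $AB$ chord $v_uv_w$ can interleave $\Theta(p^2+q^2)$ of the $AA$ and $BB$ chords, so $\mathrm{int}(X,Y_N)$ can a priori be of order $|X|\cdot(p^2+q^2)$, which dominates the claimed saving $\binom{p}{2}\binom{q}{2}$ whenever $p$ and $q$ are unbalanced (already for $p=2$, $q=n-4$ the saving is $\Theta(q^2)$ while the potential loss is $\Theta(q^3)$), and is comparable even in the balanced case. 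Since the placement of the $AA$/$BB$ chords is not under your control, no choice of $X$ is shown to work; ruling out the bad placements requires a global argument (this is exactly what the $(\le k)$-edge counting in \cite{AAFRS1} supplies), not a local recoloring. Second, the case $\min(p,q)=1$, which you correctly identify as the heart of the statement and the place where ``$n$ odd, $n\ge 13$'' enters, is not proved but deferred to the very theorem being proved (``which is the argument carried out in [\cite{AAFRS1}, Theorem 21]''); the deletion-of-$v_{i+1}$ comparison you sketch needs a lower bound on the number of crossings carried by the edges at a single vertex, which is again a nontrivial input you do not supply. In short: citing \cite{AAFRS1} for this lemma, as the paper does, is legitimate; the proposed self-contained proof is not one.
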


\begin{lemma}For any odd integer $n\geq 5$ and based diagram $(D; H)$ of $(K_n ; H)$, if $M(D; H)=M_n$, then $c_D(e_{(m,n)})=c_D(e_{(1,m)})+c_D(e_{(2,n)})+1$, where $m=\frac{n+1}{2}$.
\label{remove-add} 
\end{lemma}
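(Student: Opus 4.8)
The plan is to reduce each of $c_D(e_{(1,m)})$, $c_D(e_{(2,n)})$, $c_D(e_{(m,n)})$ to a count of entries of $M_n$ and then compare the resulting index sets directly. The key fact I would invoke is the crossing criterion behind Lemma~\ref{ci-formula}: in these hemisphere-based diagrams, $e_{(k,l)}$ meets $\mathrm{Int}(e_{(i,j)})$ precisely when $\varepsilon_H(e_{(i,j)},e_{(k,l)})=1$, i.e. when $e_{(k,l)}$ and $e_{(i,j)}$ lie on the same hemisphere and their endpoints interleave on $H$ (so $v_k,v_l$ sit on opposite arcs of $H$ cut by $\{v_i,v_j\}$). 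In matrix language this is the condition $a_{(k,l)}=a_{(i,j)}$ together with interleaving. The first step is then to read off the three relevant signs from the defining formula of $M_n$ for odd $n$; substituting $m=\frac{n+1}{2}$ gives $a_{(1,m)}=1$, $a_{(m,n)}=1$, and $a_{(2,n)}=-1$.

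With the signs fixed I would write each quantity as a lattice count. Cutting $H$ at the endpoints of each edge, and using that $v_nv_1$ is an edge of $H$, yields
\begin{align*}
c_D(e_{(m,n)}) &= \#\{(k,l)\ |\ 1\le k\le m-1,\ m+1\le l\le n-1,\ a_{(k,l)}=1\},\\
c_D(e_{(1,m)}) &= \#\{(k,l)\ |\ 2\le k\le m-1,\ m+1\le l\le n,\ a_{(k,l)}=1\},\\
c_D(e_{(2,n)}) &= \#\{l\ |\ 3\le l\le n-1,\ a_{(1,l)}=-1\},
\end{align*}
where the last line uses that the arc of $H$ cut off by $\{v_2,v_n\}$ on the $v_1$ side is the single vertex $v_1$.

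The heart of the argument is that the index rectangles for $e_{(m,n)}$ and $e_{(1,m)}$ coincide on the common block $\{2\le k\le m-1,\ m+1\le l\le n-1\}$ and differ only by two boundary strips: $e_{(m,n)}$ carries the extra column $k=1$, $m+1\le l\le n-1$, while $e_{(1,m)}$ carries the extra row $l=n$, $2\le k\le m-1$. After the common block cancels,
\begin{align*}
c_D(e_{(m,n)})-c_D(e_{(1,m)})=\#\{l\in[m+1,n-1]\ |\ a_{(1,l)}=1\}-\#\{k\in[2,m-1]\ |\ a_{(k,n)}=1\}.
\end{align*}
Reading $M_n$, one has $a_{(1,l)}=1$ for all $m\le l\le n-1$, so the first strip contributes $n-m-1$, whereas $a_{(k,n)}=1$ forces $k\ge m$, so the second strip contributes $0$; hence $c_D(e_{(m,n)})-c_D(e_{(1,m)})=n-m-1$. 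Independently, $a_{(1,l)}=-1$ exactly for $3\le l\le m-1$, giving $c_D(e_{(2,n)})=m-3$. Since $n=2m-1$ we get $n-m-1=(m-3)+1$, which is exactly the claimed identity $c_D(e_{(m,n)})=c_D(e_{(1,m)})+c_D(e_{(2,n)})+1$.

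I expect the main obstacle to be bookkeeping rather than a conceptual difficulty: correctly reading the piecewise, floor-type boundaries of $M_n$ in the rows $i=1,2$ and the column $j=n$, and handling the cyclic wrap-around at $v_nv_1\in H$ so that the two arcs, and hence the index ranges (in particular the boundary indices $k=1$ and $l=n$), are assigned correctly. One should also verify that the rectangles for $e_{(m,n)}$ and $e_{(1,m)}$ do share the stated common block, so that it cancels in the difference and only the two boundary strips survive, and that every entry of those strips is nonzero so the ``$=1$'' counts are unambiguous. These checks are routine once the sign regions of $M_n$ have been laid out explicitly.
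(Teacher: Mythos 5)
Your proposal is correct and follows essentially the same route as the paper: both reduce each $c_D$ to a count of same-signed interleaving entries of $M_n$ (using $a_{(1,m)}=a_{(m,n)}=1$, $a_{(2,n)}=-1$), observe that the index set for $e_{(m,n)}$ contains that for $e_{(1,m)}$ plus an extra strip of size $n-m-1=m-2$, and that $c_D(e_{(2,n)})=m-3$. The only cosmetic difference is that the paper organizes the count via $\sigma$ and $\widetilde{\sigma}$ and writes explicit double sums with upper limit $n-k$, whereas you cancel the common rectangle and evaluate only the boundary strips.
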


\begin{proof}
Put $M_n=(a_{(i,j)})$. 
Let $\widetilde{\sigma}(a_{(i,j)})$ be the number of components $a_{(k,l)}$ of $M_n$ satisfying $k<i<l<j$ and $a_{(i,j)}=a_{(k,l)}$. 
Then, $c_D(e_{(i,j)})=\sigma (a_{(i,j)})+\widetilde{\sigma}(a_{(i,j)})$ for any $1\leq i<j\leq n$. 
By the construction of $M_n$, $a_{(i,j)}=1$ for $1\leq i<j\leq n$ if and only if $j\geq i+2$ and $m+1\leq i+j\leq n$ or $j\geq i+2$ and $i+j\geq n+m$. 
Note that $a_{(1,m)}=1$, $a_{(2,n)}=-1$ and $a_{(m,n)}=1$. 
Since $\widetilde{\sigma}(a_{(1,m)})=0$ and $\sigma ({a_{(2,n)}})=\sigma (a_{(m,n)})=0$, we have 
$\displaystyle c_D(e_{(1,m)})=\sigma (a_{(1,m)})=\sum_{k=2}^{m-1}\sum_{l=m+1}^{n-k} 1$, 
$\displaystyle c_D(e_{(2,n)})=\widetilde{\sigma}(a_{(2,n)})=\sum_{l=3}^{m-1} 1=m-3$ and 
$\displaystyle c_D(e_{(m,n)})=\widetilde{\sigma}(a_{(m,n)})=\sum_{k=1}^{m-1}\sum_{l=m+1}^{n-k} 1=\sum_{k=2}^{m-1}\sum_{l=m+1}^{n-k} 1+\sum_{l=m+1}^{n-1} 1$. 
Since $\displaystyle \sum_{l=m+1}^{n-1} 1=n-m-1=m-2$, we have $c_D(e_{(m,n)})=c_D(e_{(1,m)})+c_D(e_{(2,n)})+1$.
\end{proof}

\noindent Theorem \ref{main-thm} is shown as follows: \\

\noindent {\bf Proof of Theorem \ref{main-thm}.} \ 
When $n=7$, we can see in Figure \ref{K7} that the theorem holds. We prove for $n\geq 9$. Take a based diagram $(D; H)$ of $(K_n; H)$ such that $M(D; H)=M_n$. Then, by Lemma \ref{mz-lem}, $(D; H)$ is optimal. Let $D'$ be a diagram of $K_n$ obtained from $D$ (see also Figures \ref{K9a} and \ref{K9b}) by removing the edge diagram $e_{(m,n)}$ (in the Northern Hemisphere) and adding a new edge diagram $e_{(m,n)}'$ for $m=\frac{n+1}{2}$ which satisfies; 
\begin{figure}[ht]
\begin{center}
\includegraphics[width=90mm]{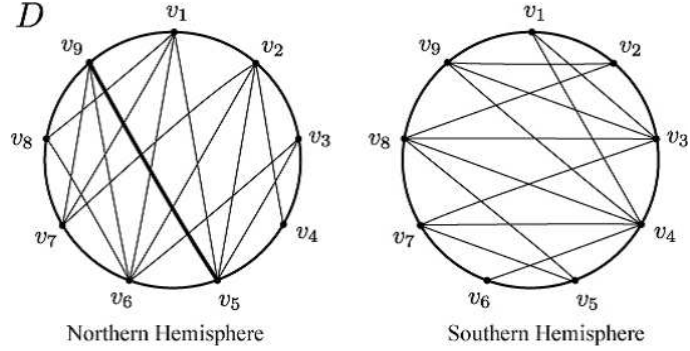}
\caption{An optimal based diagram $(D; H)$ of $(K_9; H)$ such that $M(D; H)=M_9$. }
\label{K9a}
\end{center}
\end{figure}

\begin{figure}[ht]
\begin{center}
\includegraphics[width=90mm]{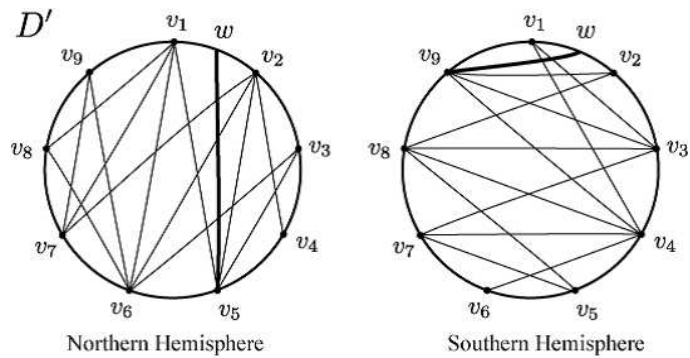}
\caption{An optimal diagram $D'$ of $K_9$ obtained from a based diagram $(D; H)$ of $(K_9; H)$ with $M(D; H)=M_9$ by removing $e_{(5,9)}$ and adding $e_{(5,9)}'$. }
\label{K9b}
\end{center}
\end{figure}

(i) the interior ${\rm Int}(e'_{(m,n)})$ of $e_{(m,n)}'$ intersects with $H$ at exactly one point $w$ in ${\rm Int}(e_{(1,2)})$,

(ii) the interior ${\rm Int}(\overline {v_m w})$ of the subarc $\overline {v_m w}$ of $e_{(m,n)}'$ connecting $v_m$ and $w$ is in the Northern Hemisphere and 

(iii) the interior ${\rm Int}(\overline {wv_n})$ of the subarc $\overline {wv_n}$ of $e_{(m,n)}'$ connecting $w$ and $v_n$ is in the Southern Hemisphere.

We prove $c(D)=c(D')$; 
Take an edge diagram $e_{(k,l)}$ of $D$ in the Northern Hemisphere with the conditions that $k<l$ and $(k,l)\ne (1,m), (m,n)$. 
Since $e_{(1,m)}$ and $e_{(k,l)}$ are also edge diagrams of $D'$ in the Northern Hemisphere, ${\rm Int}(e_{(1,m)})$ and ${\rm Int}(e_{(k,l)})$ intersect if and only if $1<k<m<l$, i.e., the four points $v_1$, $v_k$, $v_m$ and $v_l$ lay on $H$ clockwisely in this order. 
Since $w\in {\rm Int}(e_{(1,2)})$, the condition $1<k<m<l$ is equivalent to that the four points $w$, $v_k$, $v_m$ and $v_l$ lay on $H$ clockwisely in this order, i.e., ${\rm Int}(e_{(k,l)})$ and ${\rm Int}(\overline{v_m w})$ intersect. 
On the diagram $D'$, let $c_{D'}(e_{(m,n)}')$, $c_{D'}(\overline{v_m w})$ and $c_{D'}(\overline{w v_n})$ be the numbers of 
edge diagrams in $D'$ which intersect with ${\rm Int}(e_{(m,n)}')$, ${\rm Int}(c_{D'}(\overline{v_m w}))$ and ${\rm Int}(c_{D'}(\overline{wv_n}))$, respectively. Since ${\rm Int}(e_{(1,m)})\cap {\rm Int}(\overline{v_m w})=\emptyset$, we have 
$c_D(e_{(1,m)})=c_{D'}(\overline{v_m w})$. 
Similarly, we have $c_D(e_{(2,n)})=c_{D'}(\overline{w v_n})$. 
Since $|{\rm Int}(e_{(m,n)}')\cap H|=|\{w\}|=1$, we have 
$c_{D'}(e_{(m,n)}')=c_{D'}(\overline{v_m w})+c_{D'}(\overline{w v_n})+1=c_D(e_{(1,m)})+c_D(e_{(2,n)})+1$. 
By Lemma \ref{no-free}, we have $c_{D'}(e_{(m,n)}')=c_D(e_{(m,n)})$. Thus, $c(D)=c(D')=Z(n)$ holds. 
Let $T^L$ be the linear tree $v_2v_3\dots v_nv_1$ in the diagram $D'$. 
Since there are no crossings on $T^L$, 
$(D'; T^L)$ is a based diagram of $(K_n; T^L)$. 
When $n=9$ and $11$, we can see in Figures \ref{K9b} and \ref{K11} that $D'$ have no free hamiltonian cycle, respectvely. 
When $n\geq 13$, there is no free hamiltonian cycle except for $H$ in the diagram $D$ by Lemma \ref{no-free}, and hence $D'$ has no free hamiltonian cycle. 
This completes the proof of Theorem \ref{main-thm}. 
\hfill$\square$

\begin{figure}[ht]
\begin{center}
\includegraphics[width=100mm]{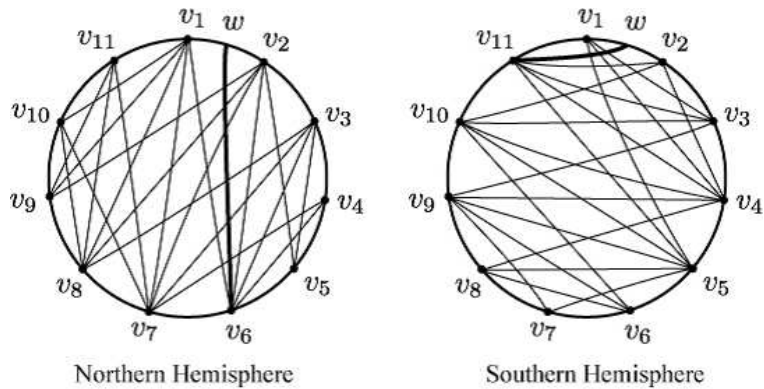}
\caption{An optimal diagram $D'$ of $K_{11}$ obtained from a based diagram $(D; H)$ of $(K_{11}; H)$ with $M(D; H)=M_{11}$ by removing $e_{(6,11)}$ and adding $e_{(6,11)}'$. }
\label{K11}
\end{center}
\end{figure}

\begin{remark}When $n=7$, the same way of construction above (the case of $n\geq 9$) does not work. See Figure \ref{rem7}.
\begin{figure}[ht]
\begin{center}
\includegraphics[width=40mm]{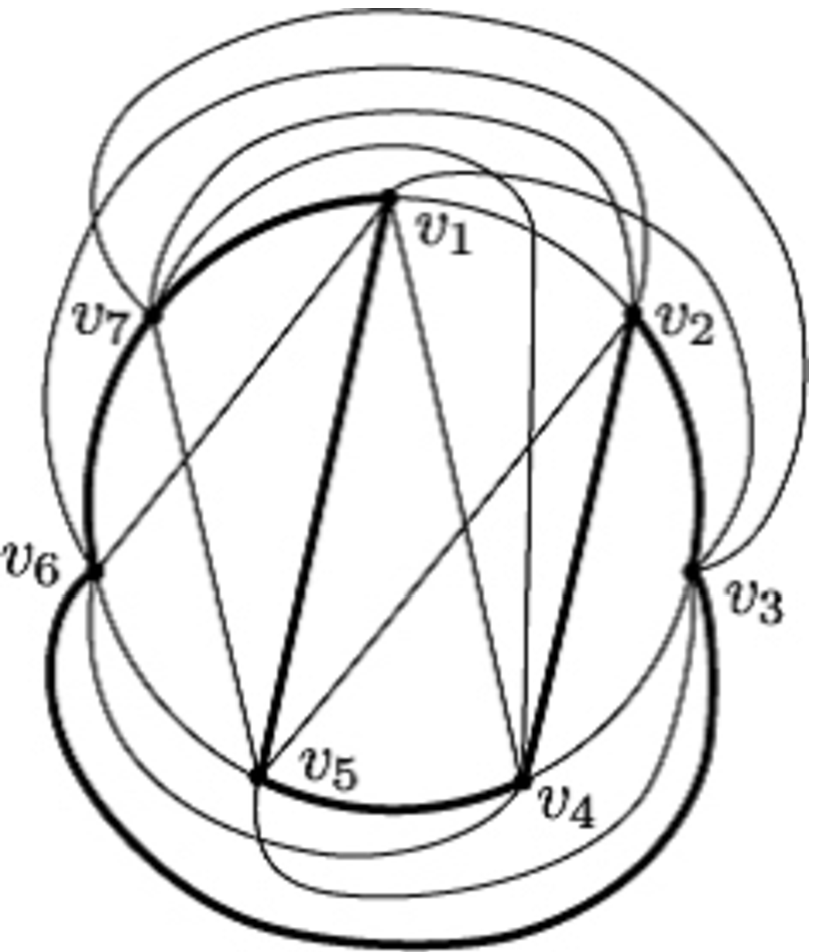}
\caption{An optimal diagram $D'$ of $K_7$ obtained from a based diagram $(D; H)$ of $(K_7; H)$ with $M(D; H)=M_7$ by removing $e_{(4,7)}$ and adding $e_{(4,7)}'$. 
The result of this construction has a hamiltonian cycle. }
\label{rem7}
\end{center}
\end{figure}

\end{remark}

\section*{Acknowledgments}
A. S. and Y. Y. thank Akio Kawauchi for valuable advice and encouragements. 
The revised virsion of this paper was written during A.S. and Y.Y.'s stay at Pusan National University. 
They would like to thank Sang Youl Lee and Jieon Kim for their kind hospitality. 
A. S. was partially supported by Grant for Basic Science Research Projects from The Sumitomo Foundation (160154).

\end{document}